\newtheorem{Theorem}{Theorem}[section]
\newtheorem{Corollary}[Theorem]{Corollary}
\newtheorem{Lemma}[Theorem]{Lemma}
\newtheorem{Proposition}[Theorem]{Proposition}
\newtheorem{TheoremAlph}{Theorem}
\newtheorem{CorNoNum}{Corollary}
\newtheorem{Definition}[Theorem]{Definition}
\newtheorem{Conjecture}[Theorem]{Conjecture}
\newcommand{\Zz}{\mathbb{Z}/{2}}
\newcommand{\p}{\vec{p}}
\newcommand{\Zps}{\mathbb{Z}}
\newcommand{\R}{\mathbb{R}}
\newcommand{\X}{\mathcal{X}}
\newcommand{\J}{J}
\newcommand{\Hz}[2][*]{H^{#1}(#2;\Zz)}
\newcommand{\Ker}{\mathbf{Ker}}
\newcommand{\Image}{\mathbf{Im}}
\newcommand{\SO}{\textit{SO}}
\newcommand{\Gr}[1]{G_{#1}}
\newcommand{\oriGr}[1]{\widetilde{G}_{#1}}
\newcommand{\oriw}{\bar{w}}
\renewcommand{\vec}[1]{\mathbf{#1}}
\newcommand{\LT}{\mathrm{LT}}
\newcommand{\LCM}{\mathrm{LCM}}
\newcommand{\setS}{\mathcal{S}}
\newcommand{\setP}{\mathcal{P}}
\newcommand{\Poly}[1]{P_{#1}}
\newcommand{\lscat}{\mathrm{cat}}
\newcommand{\cuplen}{\mathrm{cup}}
\newcommand{\zcuplen}{\mathrm{cup}_{\Zz}}
\newcommand{\bracket}[1]{\langle #1\rangle}
\begin{document}
\title[Gro\"{o}bner bases of oriented Grassmann manifolds]
{Application of Gr\"{o}bner bases to the cup-length of 
oriented Grassmann manifolds}
\author{Tomohiro Fukaya}
\thanks{The author is supported by Grant-in-Aid for JSPS Fellows (19 3177) from Japan Society for the Promotion of Science.}
\address{Department of Mathematics Kyoto University. Kyoto 606-8502, Japan.}
\email{tomo\_xi@math.kyoto-u.ac.jp}
\subjclass[2000]{Primary~ 55M30, Secondary~57T15,13P10}
\keywords{Cup-length; LS-category; Gr\"{o}bner bases; Immersion}
\begin{abstract}
For $n=2^{m+1}-4\, (m\geq 2)$, we determine the cup-length of 
$\Hz{\oriGr{n,3}}$ by finding a Gr\"{o}bner basis associated with a
 certain subring, 
 where $\oriGr{n,3}$ is the oriented Grassmann manifold $\SO(n+3)/\SO(n)
 \times \SO(3)$. As its applications, we provide not only a lower but
 also an upper bound for the LS-category of $\oriGr{n,3}$. We also study
 the immersion problem of $\oriGr{n,3}$.



\end{abstract}
\maketitle

\section{Introduction}
\label{sec:intro}
Let $R$ be a commutative ring. The cup-length of $R$ is defined by the
greatest number $n$ such that there exist $x_1,\dots,x_n \in R \setminus
R^{\times}$ with $x_1\cdots x_n \neq 0$.
We denote the cup-length of $R$ by $\cuplen(R)$. In particular, for a
space $X$ and a commutative ring $A$, the cup-length of $X$ with the
coefficient $A$, is defined by 
$\cuplen(\tilde{H}^*(X;A))$. We denote it by $\cuplen_A(X)$.
It is well-known that $\cuplen_A(X)$ is a lower bound for the LS-category of
$X$.

The aim of this paper is to study $\zcuplen(\oriGr{n,3})$, where
$\oriGr{n,k}$ is the oriented Grassmann manifold $\SO(n+k)/\SO(n) \times
\SO(k)$. Note that $\oriGr{n,k}$ is $(nk)$-dimensional.
While the cohomology of $\oriGr{n,2}$ is well-known, that of
$\oriGr{n,3}$ is in vague. 
However, Korba\v{s} \cite{K} gave rough estimations
for $\zcuplen(\oriGr{n,3})$ by considering the height of 
$w_2 \in \Hz{\oriGr{n,3}}$, where $w_2$ is the second Stiefel-Whitney class.

The author studies $\Hz{\oriGr{n,3}}$ by considering Gr\"{o}bner bases
associated with a certain subring of $\Hz{\oriGr{n,3}}$.
It seems that, in principle, the method of Gr\"{o}bner bases works
better in such complicated calculations than that of usual algebraic
topology. 
The author employs a computer and carries a huge amount of
calculations for finding the above Gr\"{o}bner bases and then he dares
to conjecture: 
\begin{Conjecture}
\begin{eqnarray*}
   \cuplen_{\Zz}(\oriGr{n,3}) = 
 \begin{cases}
  2^{m+1} -3 & \text{ when } 
    2^{m+1} -4 \leq n \leq 2^{m+1} + 2^{m} - 6,\\
  2^{m+1} -3 + k&  \text{ when } 
    n = 2^{m+1} + 2^{m} - 5 + k, \; 
    0 \leq k \leq 2,\\
{2^{m+1} + 2^m +\dots }   & \text{ when } 
    {n =  2^{m+1} + 2^m + \dots + 2^{j-1} -2  + k}, \\
\hspace{10mm}{+ 2^{j+1} +2^{j-1} + k} 
   & \hspace{40mm}{0 \leq k \leq 2^j -1}.
 \end{cases}
\end{eqnarray*}
\end{Conjecture}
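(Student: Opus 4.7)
The plan is to follow and extend the Gröbner-basis strategy that the paper establishes for the anchor case $n = 2^{m+1}-4$. Since $\oriGr{n,3}$ is the universal double cover of $\Gr{n,3}$, the first Stiefel-Whitney class vanishes on it, and the Borel presentation $\Hz{\Gr{n,3}} \cong \Zz[w_1,w_2,w_3]/(\oriw_{n+1},\oriw_{n+2},\oriw_{n+3})$ descends to a subring $A_n \subseteq \Hz{\oriGr{n,3}}$ of the form $\Zz[w_2,w_3]/J_n$, where $J_n$ is obtained from the three dual relations after setting $w_1=0$. The classes $\oriw_j$ are computed recursively via the Wu relation $\sum_i w_i \oriw_{j-i} = 0$, so $J_n$ has an explicit, if combinatorially heavy, generating set.

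For each range of $n$ listed in the conjecture I would compute a Gröbner basis $G_n$ of $J_n$ under a graded monomial order (say graded reverse lex with $w_3 > w_2$) and read off the cup-length from the monomials in $\Zz[w_2,w_3]$ that survive normal-form reduction. The lower bound in each case is certified by writing down an explicit surviving product $w_2^a w_3^b$ whose total number of factors equals the conjectured value. The upper bound requires two separate arguments: first, that every monomial in $w_2,w_3$ with a strictly larger number of factors lies in $J_n$; second, that no product of classes outside $A_n$ beats the cup-length already achieved inside $A_n$. The latter is plausible because the additional generators of $\Hz{\oriGr{n,3}}$ sit close to the Poincaré-duality top degree, leaving little room for long products, but it must still be verified case by case.

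Carrying this out uniformly across the conjecture's nested cases is where the real difficulty lies. The leading behavior of the dual classes $\oriw_j$ modulo $w_1$ is governed by the binary expansion of $n+2$, which is precisely why the stated formula branches according to $m$, $j$, and $k$. I would aim to prove, by induction on the 2-adic digits of $n+2$, that $G_n$ has a uniform skeleton within each dyadic block appearing in the conjecture and that unit increments of $n$ within a block correspond to unlocking exactly one additional nonzero product. The main obstacle is the sheer combinatorial complexity of tracking these leading terms as $n$ varies, together with the need --- for cases beyond the first clause --- to control cohomology classes outside the Stiefel-Whitney subring. The author's reliance on machine computation already at the anchor case strongly suggests that a closed-form description of $G_n$ robust enough to support a uniform induction will require either a new combinatorial identity among the $\oriw_j$ or a geometric input from the underlying flag bundle.
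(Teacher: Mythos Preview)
This statement is a \emph{Conjecture} in the paper, not a theorem: the author explicitly formulates it on the basis of computer experiments and does not prove it. The paper establishes only the anchor case $n=2^{m+1}-4$ (Theorem~A), via the Gr\"obner basis $\{P_0,\dots,P_m\}$ of $J_n$ constructed in Section~4. There is therefore no proof in the paper to compare your proposal against.

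Your write-up is not a proof either, and you essentially say so yourself. What you describe is the same strategy the paper uses for the single case it can handle --- compute a Gr\"obner basis of $J_n$, read off the longest surviving monomial in $\bar w_2,\bar w_3$, and then argue that classes outside $\Image p_n^*$ cannot lengthen the product --- together with the hope that this can be made uniform in $n$ by an induction on the binary digits of $n+2$. But you supply no candidate for the Gr\"obner basis in the other ranges, no combinatorial identity to drive the induction, and no mechanism for controlling the extra classes in $\Hz{\oriGr{n,3}}\setminus\Image p_n^*$ beyond ``it is plausible.'' Your final paragraph in fact concedes that the decisive ingredient (a closed-form $G_n$ or new geometric input) is missing. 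That is exactly the gap that keeps the statement a conjecture in the paper; your proposal restates the program without closing it.
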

When $n=2^{m+1}-4\,(m\geq 2)$, our method works very well and we obtain:
\begin{TheoremAlph}
\label{thA:cuplen}
$\cuplen_{\Zz}(\oriGr{n,3}) = n+1$ when 
$n=2^{m+1} -4 \, (m\geq 2)$.
\end{TheoremAlph}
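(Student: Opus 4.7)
The plan is to establish both inequalities $\cuplen_{\Zz}(\oriGr{n,3}) \geq n+1$ and $\cuplen_{\Zz}(\oriGr{n,3}) \leq n+1$ by analysing the subring of $\Hz{\oriGr{n,3}}$ generated by the Stiefel--Whitney classes $w_2$ and $w_3$ pulled back from the unoriented Grassmannian $\Gr{n,3}$.

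Because the double cover $\oriGr{n,3} \to \Gr{n,3}$ kills $w_1$, the image of the induced map on mod-$2$ cohomology is a quotient $A = \Zz[w_2,w_3]/I$, where $I$ is generated by the three top-degree dual Stiefel--Whitney relations $\bar w_{n+1}, \bar w_{n+2}, \bar w_{n+3}$, computed recursively from $\bar w_i = w_2 \bar w_{i-2} + w_3 \bar w_{i-3}$ (with $w_1 = 0$ and $\bar w_0 = 1$). For $n = 2^{m+1}-4$ these three polynomials take a highly regular shape, dictated by the base-$2$ expansions of the binomial coefficients that enter them via Lucas's theorem. I would first compute a Gr\"obner basis $G$ of $I$ with respect to a graded lexicographic order (say $w_3 > w_2$) and extract the staircase of leading monomials.

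For the lower bound I would focus on the product $w_2^{n+1}$. Showing $w_2^{n+1} \neq 0$ in $A$ reduces to verifying that no leading monomial of $G$ divides $w_2^{n+1}$; the expectation is that the smallest pure power of $w_2$ appearing as a leading term is exactly $w_2^{n+2}$, so $w_2^{n+1}$ lies strictly under the staircase and its normal form is nonzero. This yields $\cuplen_{\Zz}(\oriGr{n,3}) \geq n+1$, sharpening Korba\v s's height estimate for $w_2$.

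For the upper bound I would show that every monomial $w_2^a w_3^b$ whose total length $a+b$ exceeds $n+1$ reduces to zero modulo $G$, a combinatorial check once the staircase is explicit. Since $A$ is only a subring of $\Hz{\oriGr{n,3}}$, the final step is to propagate this vanishing to the whole ring: I would use the Gysin sequence of the double cover $\oriGr{n,3} \to \Gr{n,3}$ to argue that any indecomposable class outside $A$ sits in a degree so high that any $(n+2)$-fold product involving it already lives above the top degree $3n$ of $\oriGr{n,3}$. The principal obstacle is the Gr\"obner basis computation itself --- producing a basis small enough to verify by a computer-assisted but humanly readable argument, yet complete enough to certify simultaneously the survival of $w_2^{n+1}$ and the vanishing of all $(n+2)$-fold products. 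The special form $n = 2^{m+1}-4$ is precisely what makes the relevant binomial coefficients collapse to the clean pattern needed for this to succeed.
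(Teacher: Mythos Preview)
Your overall structure is close to the paper's, but there is a genuine error in the lower bound that breaks the argument. You claim that $\bar w_2^{\,n+1}\neq 0$ in the subring $A=\Image\,p_n^*\cong\Zz[\bar w_2,\bar w_3]/J_n$, expecting the smallest pure power of $\bar w_2$ among the Gr\"obner leading terms to be $\bar w_2^{\,n+2}$. This is false: already for $m=2$ (so $n=4$) the reduced Gr\"obner basis of $J_4=(\bar w_2^{\,3}+\bar w_3^{\,2},\ \bar w_2^{\,2}\bar w_3)$ in your stated order $w_3>w_2$ contains the element $\bar w_2^{\,5}=\bar w_2^{\,n+1}$ itself. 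In general the paper proves $\cuplen(\Image\,p_n^*)=n$, not $n+1$, so every $(n+1)$-fold product of $\bar w_2$'s and $\bar w_3$'s already vanishes in $A$; in particular $\bar w_2^{\,n+1}=0$ both in $A$ and in $H^*(\oriGr{n,3};\Zz)$.

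The step you are missing is Poincar\'e duality. The Gr\"obner computation yields only $\bar w_2^{\,n}\neq 0$, a class of degree $2n$; Poincar\'e duality on the closed $3n$-manifold $\oriGr{n,3}$ then produces a class $x\in H^{n}(\oriGr{n,3};\Zz)$ with $\bar w_2^{\,n}\cdot x\neq 0$, and it is this extra factor $x$, necessarily lying \emph{outside} $A$, that lifts the cup-length from $n$ to $n+1$. Your upper-bound sketch is essentially correct and matches the paper's (any factor not in $A$ has degree at least $n$, since $\oriGr{n,3}\to B\SO(3)$ is an $n$-equivalence, so an $(n+2)$-fold product involving such a factor lands above the top degree $3n$; if instead all factors lie in $A$, the product dies because $\cuplen(A)\le n+1$). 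But without the Poincar\'e-duality step the lower bound collapses, since no product of length $n+1$ built solely from classes in $A$ survives.
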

By a dimensional reason, we have 
\begin{eqnarray}
\label{eq:cat}
\lscat(X) \leq \frac{3}{2}n,
\end{eqnarray}
where $\lscat(X)$ denotes the LS-category of a space $X$ normalized as
$\lscat(*) = 0$. Theorem A gives not only lower bounds for 
$\lscat(\oriGr{n,3})$, but
also refines the inequality (\ref{eq:cat}). Actually we obtain:
\begin{CorNoNum}
\label{corNo:lscat}
$n+1 \leq \lscat(\oriGr{n,3}) < \frac{3}{2}n$ when 
$n= 2^{m+1}-4 \,(m\geq 2)$.
In particular, we have  $\lscat(\oriGr{4,3}) = 5$.
\end{CorNoNum}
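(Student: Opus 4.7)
The proof packages Theorem~A with the dimensional bound~(\ref{eq:cat}); no essentially new ingredient is needed. For the lower bound $n+1 \leq \lscat(\oriGr{n,3})$, I will apply the textbook inequality $\cuplen_R(X) \leq \lscat(X)$, valid for any coefficient ring $R$, with $R = \Zz$. Combined with Theorem~A — which asserts $\cuplen_{\Zz}(\oriGr{n,3}) = n+1$ whenever $n = 2^{m+1} - 4$ — this immediately gives $\lscat(\oriGr{n,3}) \geq n+1$.

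For the upper bound $\lscat(\oriGr{n,3}) < \tfrac{3}{2}n$, I will appeal to~(\ref{eq:cat}). The underlying mechanism is that $\oriGr{n,3}$ is simply connected (being the universal double cover of $\Gr{n,3}$) and has dimension $3n$, so Ganea's theorem for simply connected CW complexes yields the weak bound $\lscat \leq 3n/2$. Upgrading to the strict inequality — the one point of genuine technical content — requires ruling out the extremal case of Ganea's bound; a standard observation (for instance, that $\oriGr{n,3}$ is not rationally a sphere, or a direct construction of a categorical open cover by fewer than $3n/2 + 1$ subsets coming from the Schubert decomposition of $\Gr{n,3}$) should suffice.

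Finally, the ``in particular'' claim $\lscat(\oriGr{4,3}) = 5$ follows by a squeeze: for $n = 4$ the two bounds read $n+1 = 5$ and $\tfrac{3}{2}n = 6$, and strictness in the upper bound forces $\lscat(\oriGr{4,3}) = 5$. The main obstacle in the plan is therefore solely the justification of the strict form of~(\ref{eq:cat}); every other step is a direct quotation of results already in hand.
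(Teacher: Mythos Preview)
Your lower bound and the final squeeze for $n=4$ are exactly as in the paper. The gap is precisely where you flag it: the strict upper bound. Your two suggested routes (ruling out equality in Ganea's bound via ``not rationally a sphere'', or producing a categorical cover from Schubert cells) are neither standard nor carried out, and I do not see how either would go through without substantial additional work; equality in the connectivity/dimension bound $\lscat \leq \dim/2$ for simply connected spaces is not characterised by anything as coarse as rational sphere type.

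The paper's argument for the strict inequality is different and uses the main computation in an essential way. One observes that $\bar w_2 \in H^2(\oriGr{n,3};\Zz)$ is a fundamental class in the sense of James, and Corollary~\ref{cor:cup(Im p^*)} gives $\bar w_2^{\,n+1}=0$. Proposition~5.3 of James \cite{J} then yields $\lscat(\oriGr{n,3}) < \tfrac{3}{2}n$ directly. The point you are missing is that the Gr\"obner basis calculation does double duty: it not only gives $\bar w_2^{\,n}\neq 0$ (feeding the lower bound via Theorem~A) but also $\bar w_2^{\,n+1}=0$, and it is this vanishing, fed into James's refinement of the connectivity bound, that produces the strict upper inequality. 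So the upper bound is not ``a dimensional reason'' alone but a genuine consequence of the height computation.
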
 
We will give applications of Theorem \ref{thA:cuplen} for the immersion
problem of 
$\oriGr{n,3}$. By the classical result  of Whitney \cite{Wh}, we
know that $\oriGr{n,3}$ immerses into $\R^{6n-1}$. We will show:
\begin{TheoremAlph}
\label{thA:imm}
 The oriented Grassmann manifold $\oriGr{n,3}$ immerses into $\R^{6n-3}$
 but not into $\R^{3n+8}$ when $n= 2^{m+1}-4 \,(m\geq 3)$ and
 $\oriGr{4,3}$ immerse into $\R^{21}$ but not into $\R^{17}$.
\end{TheoremAlph}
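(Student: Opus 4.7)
My strategy is to translate both parts of Theorem~\ref{thA:imm} into (non)vanishing statements for the dual Stiefel--Whitney classes $\bar w_i$ of $T\oriGr{n,3}$, and then decide those statements using the Gr\"obner basis constructed in the proof of Theorem~\ref{thA:cuplen}. Write $\gamma$ for the canonical oriented $3$-plane bundle over $\oriGr{n,3}$; then $\gamma \oplus \gamma^{\perp} \cong \varepsilon^{n+3}$ and $T\oriGr{n,3} \cong \mathrm{Hom}(\gamma, \gamma^{\perp})$, so $w(T\oriGr{n,3})$ and its formal inverse $\bar w = 1/w(T\oriGr{n,3})$ become explicit polynomials in $w_2, w_3 \in \Hz{\oriGr{n,3}}$. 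The Gr\"obner basis of Theorem~\ref{thA:cuplen} is exactly the tool that decides when such polynomials are nonzero in $\Hz{\oriGr{n,3}}$.

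For the non-immersion assertion I would use the classical obstruction: an immersion $\oriGr{n,3} \hookrightarrow \R^{3n+k}$ forces $\bar w_i = 0$ for all $i > k$. Hence it suffices to exhibit some $s \geq 9$ (respectively $s \geq 6$ in the $n=4$ case) with $\bar w_s \neq 0$. I would expand $\bar w$ up to the relevant degree, write each $\bar w_s$ as a polynomial in $w_2, w_3$, and reduce it to canonical form with the Gr\"obner basis to certify non-vanishing. For the positive immersion in $\R^{6n-3}$ I would first verify the necessary condition $\bar w_i = 0$ for every $i > 3n - 3$ by the same reduction, then upgrade it to an actual immersion via Hirsch's theorem, i.e., by exhibiting a rank $3n - 3$ bundle $\eta$ with $T\oriGr{n,3} \oplus \eta$ trivial. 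A natural candidate for $\eta$ should assemble from multiples of $\gamma$ and $\gamma^{\perp}$, whose total Stiefel--Whitney class cancels that of $T\oriGr{n,3}$ in exactly the degrees permitted by Theorem~\ref{thA:cuplen}.

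The main obstacle is the positive immersion: vanishing of the $\bar w_i$ in degrees $> 3n - 3$ is only necessary, not sufficient, for a rank $3n - 3$ reduction of the stable normal bundle. Closing this gap will require either carrying out the explicit bundle construction sketched above or invoking an obstruction-theoretic argument that rules out the higher primary and secondary obstructions in the relevant range. The non-immersion half, by contrast, is essentially algebraic once Theorem~\ref{thA:cuplen} is in place: it reduces to expanding $\bar w$ and applying the Gr\"obner basis reduction.
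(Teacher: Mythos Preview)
Your non-immersion strategy matches the paper's: compute the total Stiefel--Whitney class of the stable normal bundle $\nu$ and exhibit a nonzero $w_i(\nu)$ in sufficiently high degree. The paper streamlines the computation by combining $T\oriGr{n,3}\oplus(\gamma\otimes\gamma)\cong(n+3)\gamma$ with the relation $(1+\oriw_2+\oriw_3)^{n+4}=1$ (a consequence of Corollary~\ref{cor:cup(Im p^*)}) to obtain the closed formula
\[
w(\nu)=(1+\oriw_2^2+\oriw_3^2)(1+\oriw_2+\oriw_3),
\]
so the top class $w_9(\nu)=\oriw_3^3$ is visibly nonzero for $m\ge 3$, and no further Gr\"obner reduction is needed.

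For the immersion into $\R^{6n-3}$, your primary plan---assembling an explicit rank-$(3n-3)$ bundle $\eta$ from copies of $\gamma$ and $\gamma^\perp$ with $T\oriGr{n,3}\oplus\eta$ trivial---will not reach the target rank. The natural identity $T\oplus(\gamma\otimes\gamma)\cong(n+3)\gamma$ only produces a normal bundle of rank $n^2+3n+9$ after trivializing, far above $3n-3$, and there is no evident geometric candidate of the correct rank. The paper instead takes your alternative route, obstruction theory, and the key input you are missing is this: in the modified Postnikov tower of $B\SO(3n-3)\to B\SO$ through dimension $3n$ (after Gitler--Mahowald), the primary $k$-invariants are $w_{3n-2}$ and $w_{3n}$, which vanish on $\nu$ since $w_i(\nu)=0$ for $i>9$ by the formula above, and there is a single secondary $k$-invariant living in $\Hz[3n-1]{\oriGr{n,3}}$. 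But Poincar\'e duality and simple connectivity give $\Hz[3n-1]{\oriGr{n,3}}=0$, so this obstruction vanishes automatically, $\nu$ lifts to $B\SO(3n-3)$, and Hirsch's theorem yields the immersion. You should replace the speculative bundle construction with this two-stage lifting argument.
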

Remark : Walgenbach \cite{Wa} obtained better results on the
non-immersion of $\oriGr{n,3}$:
$\oriGr{n,3}$ does not immerses into $\R^{4n-2m +3}$. On the other hand,
due to R. Cohen \cite{C}, $\oriGr{n,3}$ is known to be immersed into
$\R^{6n-m+1}$. Then Theorem B gives a better estimation when $m=2,3$.

The organization of this paper is as follows. In section
\ref{sec:coh}, we consider the double covering map 
$p_n \colon \oriGr{n,3} \rightarrow \Gr{n,3}$, 
where $\Gr{n,3}$ is the unoriented Grassmann manifold 
$O(n+3)/O(n) \times O(3)$.
We identify the subring $\Image p_n^*$ of $\Hz{\oriGr{n,3}}$ with
a certain algebra $\Zz[\oriw_2,\oriw_3]/J_{n}$, where generators of
$J_{n}$ are given.
In section \ref{sec:gen}, setting $n=2^{m+1}-4 \,(m\geq 2)$, we will give an
explicit description of generators of the ideal $J_{n}$ by using the
binary expansion.
In section \ref{sec:grob}, we compute a Gr\"{o}bner basis of $J_{n}$
and obtain $\cuplen(\Image p_n^*)$.
In section \ref{sec:cuplen}, we show $\cuplen(\Image p_n^*)$
determines $\cuplen_{\Zz}(\oriGr{n,3})$ and obtain it. As its
applications, we give some estimations for $\lscat(\oriGr{n,3})$ and
study the immersion problem of $\oriGr{n,3}$.

\section{Cohomology of $\oriGr{n,3}$}
\label{sec:coh}
We consider the double covering 
\begin{eqnarray}
\label{eq:cover}
 p_{n} \colon \oriGr{n,3}
\rightarrow \Gr{n,3}.
\end{eqnarray} 
It will be shown that $\cuplen_{\Zz}({\oriGr{n,3}})$ can be
determined by $\cuplen(\Image p_{n}^*)$. Then we shall investigate 
$\cuplen(\Image p_{n}^*)$.
%

The mod 2 cohomology of $BO(3)$ is given by
\[
 \Hz{BO(3)} = \Zz[w_1,w_2,w_3],
\]
where $w_i$ is the $i$-th universal Stiefel-Whitney class. It is
well-known that the canonical map $i \colon \Gr{n,3} \rightarrow BO(3)$
induces an epimorphism $i^* \colon \Hz{BO(3)} \rightarrow
\Hz{\Gr{n,3}}$. Hereafter we denote $i^*(w_i)$ by the same symbol
$w_i$ ambiguously.

One can easily see that the above double covering (\ref{eq:cover})
induces the Wang sequence as:
\[
\cdots
\longrightarrow \Hz[q-1]{\Gr{n,3}} \stackrel{\cdot w_1}{\longrightarrow} 
 \Hz[q]{\Gr{n,3}} \stackrel{p_{n}^*}{\longrightarrow} \Hz[q]{\oriGr{n,3}} 
\longrightarrow \cdots.
\]
Then we have 
\[
 \Image p_{n}^* \cong \Zz[w_1,w_2,w_3]\Big/(w_1,\Ker i^*).
\]

Let $\pi \colon \Zz[w_1,w_2,w_3] \rightarrow \Zz[w_2,w_3]$ be the
abstract ring homomorphism defined by $\pi(w_1) = 0$, $\pi(w_2) = w_2$
and $\pi(w_3) = w_3$. Then it induces the isomorphism
\[
 \Image p_{n}^* \cong \Zz[\oriw_2,\oriw_3] \Big/ J_{n},
\]
where $\pi(\Ker i^*) = J_{n}$ and we denote $w_i$ in 
$\Hz{\oriGr{n,3}}$ by $\oriw_i$. 
Note that the commutative diagram
\begin{eqnarray*}
 \xymatrix{
\oriGr{n,3} \ar[d]_{\tilde{\imath}} \ar[r]_{p_n} & \Gr{n,3} \ar[d]_i\\
B\SO(3)  \ar[r]_{p_\infty} & BO(3)}
\end{eqnarray*}
yields that $\tilde{\imath}^*(w_i) = \oriw_i$ for $i= 2,3$ and $p_\infty^*
\colon \Hz{BO(3)} \rightarrow \Hz{B\SO(3)}$ is expressed by 
$\pi \colon \Zz[w_1,w_2,w_3] \rightarrow \Zz[w_2,w_3]$.

Let us give explicit generators of $J_{n}$.
Borel \cite{B} showed that $\Ker i^*$ is generated by
the homogeneous components of degrees $n+1$, $n+2$ and $n+3$ in
\[
 \frac{1}{1+w_1+ w_2 + w_3}.
\]
Then it follows that $J_{n}$ is generated by the homogeneous components of
degrees $n+1$, $n+2$ and $n+3$ in
\[
 \frac{1}{1+ \oriw_2 + \oriw_3}.
\]
Let $N$ be the unique integer which satisfies $2^N < n \leq 2^{N+1}$. 
Since $\dim{\oriGr{n,3}} < 4n \leq 2^{N+3}$, we have
\[
 (1+\oriw_2+\oriw_3)^{2^{N+3}}=1 
\]
in $\Hz{\oriGr{n,3}}$. Then it follows that 
\begin{eqnarray*}
 \frac{1}{1+\oriw_2+\oriw_3} = (1+\oriw_2+\oriw_3)^{2^{N+3}-1}
\end{eqnarray*}
and hence $J_{n}$ is generated by 
\begin{eqnarray}
\label{eq:g}
  g_r=\sum_{ \frac{r}{3} \leq s \leq \frac{r}{2}}
      \binom{s}{3s-r}\oriw_2^{3s-r}\oriw_3^{r-2s}
\end{eqnarray}
for $r = n+1,n+2,n+3$.
\section{Investigating generators of $J_{n}$}
\label{sec:gen}
In this section, we investigate generators $g_{n+1}$, $g_{n+2}$ and 
$g_{n+3}$ of $J_{n}$ by exploiting the binary expansion.

Let us prepare notation for the binary expansion. To a non-negative
integer $x$ with $0 \leq x < 2^k$, we assign a sequence
\[
\epsilon_k(x) = (x_{k-1},\dots,x_0) \in \{0,1\}^k 
\] 
such that
\begin{eqnarray}
\label{eq:binary}
 x = \sum_{i=0}^{k-1}x_i2^i.
\end{eqnarray}
(\ref{eq:binary}) is, of course, the binary expansion of $x$.
We denote $1 - a$ by $\overline{a}$ with $a \in \{0,1\}$. 
For example, we have 
\[
 \epsilon_k(2^{k}-1)=(1,\dots,1)
\]
and
\[
 \epsilon_{k}(2^k-1-x)=(\overline{x_{k-1}},\dots,\overline{x_{0}})
\]
for $\epsilon(x) = (x_{k-1},\dots,x_0)$.
We often denote $(x_k,\dots,x_0)\in \{0,1\}^k$ by $\vec{x}_k$.

To calculate $\binom{s}{3s-r}$ modulo 2, we use the following well-known
result from elementary number theory.

\begin{Lemma}
\label{lem:lucas}
Let $n$ and $k$ be non-negative integers such that $k\leq n\leq 2^l-1$ and
 $\epsilon_{l}(n)=(n_{l-1},\ldots,n_0),\epsilon_l(k)=(k_{l-1},\ldots,k_0)$. 
Then we have
 $\binom{n}{k}\equiv 1 \pmod 2$ if and only if $k_i=1$ 
 implies $n_i=1$ for each $i$.
\end{Lemma}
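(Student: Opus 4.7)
The plan is to prove this classical fact (Lucas' theorem modulo $2$) by comparing two ways to expand $(1+x)^n$ in the polynomial ring $\Zz[x]$. The coefficient of $x^k$ in $(1+x)^n$ is $\binom{n}{k}$, so the statement reduces to showing that, modulo $2$, this coefficient equals $1$ exactly when the binary digits of $k$ are supported on the positions where the binary digits of $n$ are $1$.

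First I would invoke the freshman's dream in characteristic $2$: the Frobenius endomorphism gives $(1+x)^{2^i} = 1+x^{2^i}$ in $\Zz[x]$ for every $i \geq 0$. Writing $n = \sum_{i=0}^{l-1} n_i 2^i$ with $n_i \in \{0,1\}$, I can then factor
\begin{eqnarray*}
(1+x)^n \;=\; \prod_{i=0}^{l-1} (1+x)^{n_i 2^i} \;=\; \prod_{i=0}^{l-1} \bigl(1+x^{2^i}\bigr)^{n_i} \;=\; \prod_{i : n_i = 1} \bigl(1+x^{2^i}\bigr)
\end{eqnarray*}
in $\Zz[x]$.

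Next I would expand this finite product and read off the coefficient of $x^k$. By distributivity, the coefficient of $x^k$ equals the number (mod $2$) of subsets $S \subseteq \{i : n_i = 1\}$ with $\sum_{i \in S} 2^i = k$. Uniqueness of the binary expansion forces at most one such $S$ to exist, and such an $S$ exists if and only if every index $i$ with $k_i = 1$ satisfies $n_i = 1$. Equating this coefficient with $\binom{n}{k} \pmod 2$ yields the lemma.

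The argument is entirely formal and there is no real obstacle; the only point requiring care is the legitimacy of the identity $(1+x)^{2^i} = 1 + x^{2^i}$ in $\Zz[x]$, which follows from the vanishing of the intermediate binomial coefficients $\binom{2^i}{j}$ for $0<j<2^i$ modulo $2$ (or equivalently from the Frobenius being a ring homomorphism in characteristic $2$).
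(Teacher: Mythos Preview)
Your proof is correct and is the standard argument for Lucas' theorem via the Frobenius in characteristic $2$. The paper itself does not prove this lemma; it simply quotes it as a ``well-known result from elementary number theory'' and uses it without further justification, so there is nothing to compare against beyond noting that your argument fills in exactly what the paper omits.
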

In the rest of this paper, we assume that 
\par
\begin{center}
\fbox{$n = 2^{m+1} -4 \; (m\geq2)$.  } 
\end{center}
Applying Lemma \ref{lem:lucas} to the coefficients of $g_{n+1}$, we have:
\begin{Proposition}
\label{prop:g_{n+1}}
 $\binom{s}{3s-(n+1)}$ is even for all integer  $s$ with 
$ \frac{n+1}{3} \leq s \leq \frac{n+1}{2}$, that is $g_{n+1} = 0$.
\end{Proposition}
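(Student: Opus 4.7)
The plan is to rewrite the coefficient $\binom{s}{3s-(n+1)}$ as $\binom{a+b}{a}$ and then exhibit a single binary position at which both $a$ and $b$ carry a $1$, so that Lemma~\ref{lem:lucas} forces the coefficient to be even. Setting $a=3s-(n+1)$ and $b=(n+1)-2s$, one has $a+b=s$ and $2a+3b=n+1=2^{m+1}-3$. Since $n+1$ is odd, so is $b$, and $b\ge 1$ throughout the range. Writing $b=2c-1$ gives $a=2^m-3c$, and the bounds $(n+1)/3\le s\le (n+1)/2$ translate cleanly to $1\le c\le \lfloor 2^m/3\rfloor$. Thus the proposition reduces to the purely arithmetic claim that for every such $c$, the numbers $2^m-3c$ and $2c-1$ share a $1$-bit.

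The natural choice of bit is $j:=\nu_2(c)$. Write $c=2^j c''$ with $c''$ odd; since $c<2^m$ we automatically get $j<m$, which is precisely what keeps the rest honest. From $2c=2^{j+1}c''$ and the standard borrow-chain when subtracting $1$, the number $b=2^{j+1}c''-1$ carries $1$'s in positions $0,1,\dots,j$. On the other side, $3c=2^j(3c'')$ with $3c''$ odd, so $\nu_2(3c)=j$, and then
\[
 a = 2^m-3c = 2^j\bigl(2^{m-j}-3c''\bigr),
\]
where the cofactor $2^{m-j}-3c''$ is odd because $m-j\ge 1$ makes $2^{m-j}$ even while $3c''$ is odd. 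Hence $\nu_2(a)=j$, so bit $j$ of $a$ is $1$. Both $a$ and $b$ therefore have a $1$ in position $j$, and Lemma~\ref{lem:lucas} yields $\binom{s}{a}\equiv 0\pmod{2}$.

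The only step that requires care is verifying $j<m$, since otherwise the cofactor $2^{m-j}-3c''$ might fail to be odd and the argument would collapse; but this is immediate from $c\le 2^m/3<2^m$. Checking that the substitution $s\leftrightarrow c$ is a bijection onto the stated range is similarly routine. Once these two bookkeeping items are in place the argument is uniform in $s$, so no case analysis on the binary expansion of $s$ itself is needed.
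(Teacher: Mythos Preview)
Your proof is correct and follows essentially the same strategy as the paper's: both arguments single out one binary position and invoke Lemma~\ref{lem:lucas}, and in fact your bit $j=\nu_2(c)$ coincides with the paper's least index $i$ with $s_i=0$ (since $\nu_2(a)=j$ and bits $0,\dots,j$ of $b$ are all $1$ force bit $j$ of $s=a+b$ to vanish). The only cosmetic difference is that you reparametrize via $a,b,c$ and phrase the conclusion in Kummer form (a shared $1$-bit of $a$ and $b$) rather than the Lucas form (a $1$-bit of $n+1-2s$ over a $0$-bit of $s$); the observation just made shows these are the same statement here, so the citation of Lemma~\ref{lem:lucas} is justified.
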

\begin{proof}
 Let $\epsilon_m(s) = (s_{m-1},\dots,s_0)$ for $\frac{n+1}{3}\leq s \leq
 \frac{n+1}{2}$ and let $\epsilon_{m+1}(n+1-2s)=(t_m,\dots,t_0)$.
Since $s \leq 2^m -2$, there exists an integer $i$ such that $s_i=0$.
Let $i$ be the least integer satisfying $s_i = 0$, that is,
$\epsilon_m(s) = (s_{m-1},\dots,s_{i+1},0,1,\dots,1)$.
Then it is easy to show that $t_i=1$. 
Hence it follows from Lemma \ref{lem:lucas} that 
$\binom{s}{3s-(n+1)} = \binom{s}{n+1-2s} \equiv 0 \pmod 2$.
%
\end{proof}
Next we investigate  $g_{n+2}$. Coefficients of $g_{n+2}$ are well
understood by considering their binary expansion as in the above case of
$g_{n+1}$. Let
\label{def:Sm,Pm}
\begin{eqnarray*}
\setS_k = \left\{s \in \Zps \left|
\tfrac{n(k)+2}{3} \leq s \leq \tfrac{n(k)+2}{2} \text{, }
\epsilon_k(s) = (s_{k-1},\dots,s_0) \,
\text{\small satisfies that if }s_j = 0,\text{\small then }s_{j+1} = 1
 \right. \right\},
\end{eqnarray*}
here $n(k)=2^{k+1}-4$.
Note that $\frac{n(k)+2}{3} \leq s \leq \frac{n(k)+2}{2}$ implies that
$s_{k-1}$ is always equal to 1 for each $s \in \setS_k$ with $\epsilon_k(s) =
(s_{k-1},\dots, s_0)$. 
There is a one-to-one correspondence between non-zero coefficients of 
$g_{n+2}$ and $\setS_m$ as:
\begin{Lemma}
\label{lem:g_{n+2}}
 \[
  \binom{s}{3s - (n+2)} \equiv 1 \pmod 2 \quad \text{ if and only if}
  \quad s \in \setS_m.
 \]
\end{Lemma}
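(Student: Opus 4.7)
The plan is to apply Lucas's Lemma (Lemma \ref{lem:lucas}) to the binomial coefficient $\binom{s}{3s-(n+2)} = \binom{s}{n+2-2s}$ after writing out the binary expansion of $n+2-2s$. Since $n+2 = 2^{m+1}-2$, for any $s$ in the range of summation we have $s \le (n+2)/2 = 2^m-1$, so $s$ fits in $m$ bits and we may write $\epsilon_m(s) = (s_{m-1},\dots,s_0)$.

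Next, I would compute the binary expansion of $n+2-2s$. Writing $n+2-2s = 2(2^m-1-s)$ and using the identity $\epsilon_m(2^m-1-s) = (\overline{s_{m-1}},\dots,\overline{s_0})$ stated in Section \ref{sec:gen}, multiplication by $2$ shifts the digits one position to the left, so
\[
\epsilon_{m+1}(n+2-2s) = (\overline{s_{m-1}},\dots,\overline{s_0},0).
\]
That is, the bit at position $j$ of $n+2-2s$ is $\overline{s_{j-1}}$ for $1\le j\le m$, and the bit at position $0$ is $0$.

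Now I would apply Lemma \ref{lem:lucas} bit by bit. The bit at position $0$ of $n+2-2s$ is $0$, imposing no constraint. At position $m$, the bit of $s$ is $0$ (since $s < 2^m$), whereas the bit of $n+2-2s$ is $\overline{s_{m-1}}$, forcing $s_{m-1}=1$. For intermediate positions $1\le j\le m-1$, the condition is $\overline{s_{j-1}} \le s_j$, i.e.\ $s_{j-1}=0 \Rightarrow s_j=1$; re-indexing with $i = j-1$, this is exactly the condition ``if $s_i=0$ then $s_{i+1}=1$'' for $0\le i\le m-2$ appearing in the definition of $\setS_m$. Finally I would remark that, as observed after the definition of $\setS_m$, the range $s \ge (n+2)/3 = (2^{m+1}-2)/3 \ge 2^{m-1}$ (valid for $m\ge 2$) automatically forces $s_{m-1}=1$, so this part of the Lucas condition is built into the range, and the remaining bit-conditions coincide with membership in $\setS_m$.

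The main technical point to handle carefully is the off-by-one bit-shift created by the factor of $2$ in $2s$: the complement pattern $(\overline{s_{m-1}},\dots,\overline{s_0})$ occupies positions $1,\dots,m$ of $n+2-2s$ rather than $0,\dots,m-1$, which is precisely what turns Lucas's condition into the ``no two consecutive zeros'' pattern defining $\setS_m$. Apart from this bookkeeping the argument is a direct application of Lemma \ref{lem:lucas}, so I expect no further obstacle.
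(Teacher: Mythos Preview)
Your proof is correct and follows exactly the paper's approach: compute $\epsilon_{m+1}(n+2-2s) = (\overline{s_{m-1}},\dots,\overline{s_0},0)$ and then read off the Lucas conditions. The paper's version is simply terser, omitting the bit-by-bit unpacking and the remark about $s_{m-1}=1$ that you spell out.
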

\begin{proof}
 Let $\epsilon_m(s) = (s_{m-1},\dots,s_0)$ for 
$\frac{n(m)+2}{3} \leq s \leq \frac{n(m)+2}{2}$.
Then we have
\begin{eqnarray*}
 \epsilon_{m+1}(n+2 - 2s) =
      (\overline{s_{m-1}},\dots,\overline{s_{0}},0)
\end{eqnarray*}
and hence Lemma \ref{lem:g_{n+2}} follows from Lemma \ref{lem:lucas}.
\end{proof}
It is convenient for calculations in section \ref{sec:grob} to index
coefficients of $g_{n+2}$ by exponents of $\oriw_2$ in (\ref{eq:g}), that
is, $3s - (n+2)$, not by $s \in \setS_m$. Then we define a set 
$\setP_k$ by
\begin{eqnarray*}
 \setP_k = \left\{ \left. p \in \Zps \right|
	       p = 3s - (n(k)+2), \, s \in \setS_k
\right\}.
\end{eqnarray*}
$\setP_m$ is expressed by the binary expansion as:

\begin{Proposition}
\label{prop:Pm}
Let
\begin{eqnarray*}
 \Delta_k = \left\{(p_{k-1},\dots,p_0) \in \{0,1\}^k \left|\text{If }
p_{l-1}=1 \text{ and }p_l=p_{l+1}=\cdots=p_{l+2t}=0, 
\text{ then } p_{l+2t+1}=0 \right.\right\},
\end{eqnarray*}
here we assume that $p_{-1} = 1$. Then we have
\[
 \setP_m = \left\{\left. p \in \Zps \right| \epsilon_m(p) \in \Delta_m\right\}.
\]
\end{Proposition}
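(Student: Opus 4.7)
The plan is to find an explicit description of the binary expansion of $p = 3s - (n+2)$ in terms of the bits of $s \in \setS_m$, and then read off the run-length condition defining $\Delta_m$. The key identity, which I would establish first, is
\[
  3s - (n+2) \;=\; \sum_{i=0}^{m-1} s_i s_{i-1}\, 2^i \qquad \text{for every } s \in \setS_m,
\]
with the convention $s_{-1} := 1$. Since each coefficient $s_i s_{i-1}$ lies in $\{0,1\}$, the right-hand side is automatically the binary expansion of $p$, so $p_i = s_i s_{i-1}$ for $i = 0,\ldots,m-1$. The identity follows by direct computation: the defining property $s_{i-1}=0 \Rightarrow s_i = 1$ of $\setS_m$ gives $s_i(1-s_{i-1}) = 1-s_{i-1}$ for $i \geq 1$, whence $s_i s_{i-1} = s_i - (1-s_{i-1})$, and summing with weights $2^i$ (using $s_0 s_{-1} = s_0$ and $s_{m-1} = 1$) collapses to $s - (2^{m+1} - 2 - 2s) = 3s - (n+2)$.

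For the forward inclusion $\setP_m \subseteq \{p : \epsilon_m(p) \in \Delta_m\}$ I would use the identity to locate the zero positions of $\epsilon_m(p)$: they are precisely the positions in $J \cup (J+1)$, where $J := \{j : s_j = 0\}$. Because $\setS_m$ forbids two consecutive elements of $J$, each $j \in J$ contributes a disjoint pair $\{j, j+1\}$ of zeros in $p$, and a maximal block $\{j,\, j+2,\, \ldots,\, j+2(r-1)\} \subseteq J$ at spacing exactly $2$ produces a single zero-run $\{j,\, j+1,\, \ldots,\, j+2r-1\}$ of length $2r$ in $\epsilon_m(p)$, preceded by a $1$ (or by the implicit $p_{-1}=1$ if the block starts at $0$). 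Every zero-run in $\epsilon_m(p)$ arises in this way, is therefore of even length, and satisfies the $\Delta_m$ condition.

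For the reverse inclusion I would reconstruct $s$ from $p$ inductively: set $s_{-1} := 1$ and, for $i = 0, 1, \ldots, m-1$, take $s_i := p_i$ if $s_{i-1} = 1$ and $s_i := 1$ (forced) if $s_{i-1} = 0$. Reading $\epsilon_m(p)$ upward from position $0$, each maximal zero-run of even length $2k$ preceded by a $1$ consumes $2k$ steps of the recursion, confirms the consistency $p_i = s_i s_{i-1}$ at each step, and restores the state $s_{i-1} = 1$; the even-length condition also forces the terminal value $s_{m-1} = 1$. The identity of the first step then yields $p = 3s - (n+2)$ with $s \in \setS_m$, so $p \in \setP_m$. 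The main technical obstacle is the treatment of zero-runs extending to position $m-1$: here the interpretation of $\Delta_m$ (which requires the conclusion index $l+2t+1$ to lie in range) is precisely what excludes odd-length trailing runs and guarantees the recursion ends with $s_{m-1} = 1$.
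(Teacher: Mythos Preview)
Your argument is correct and takes a genuinely different route from the paper's. The paper proceeds by induction on $m$: it first establishes the recursive decomposition
\[
\setS_m = \{s+2^{m-1} : s \in \setS_{m-1}\} \sqcup \{s+2^{m-1} : s \in \setS_{m-2}\}
\]
by casing on the bit $s_{m-2}$, then computes $3(s+2^{m-1})-(n+2)$ in each case to obtain the matching decomposition $\setP_m = \{p+2^{m-1} : p \in \setP_{m-1}\} \sqcup \setP_{m-2}$, and finally invokes the parallel recursion $\Delta_m = \{(1,\vec{p}_{m-1})\} \sqcup \{(0,0,\vec{p}_{m-2})\}$ (Proposition~\ref{prop:Delta}(\ref{enu:D_m})) to close the induction. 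Your approach instead produces the closed-form bit identity $p_i = s_i s_{i-1}$ in one stroke and reads off the even-run characterization directly, with an explicit inverse reconstruction. This is more elementary---no induction, no auxiliary recursive structure---and yields a concrete bitwise dictionary between $s$ and $p$ that the inductive proof never makes explicit. The paper's approach, on the other hand, dovetails with the recursive description of $\Delta_m$ that is used repeatedly in the Gr\"obner basis computations of Section~\ref{sec:grob}, so it fits the surrounding machinery more naturally. One small point: your parenthetical on the boundary case is ambiguously worded; to be consistent with Proposition~\ref{prop:Delta}(\ref{enu:D_m}) the correct reading of $\Delta_m$ is that whenever the hypothesis $p_{l-1}=1,\,p_l=\cdots=p_{l+2t}=0$ holds with $l+2t=m-1$, membership fails (equivalently, trailing zero-runs must have even length), which is indeed what your reconstruction needs for $s_{m-1}=1$.
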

We list some properties of $\Delta_{k}$ which will be
useful in the following discussion. The proof is straightforward.
\begin{Proposition}
\label{prop:Delta}
The set $\Delta_k$ has the following properties.
\renewcommand{\theenumi}{\alph{enumi}}
\begin{enumerate}
 \item \label{enu:(1,p_k)}
       $\vec{p}_k \in \Delta_k$ implies $(1,\vec{p}_k) \in
       \Delta_{k+1}$.
 \item \label{enu:(p_k,1)} 
       $\vec{p}_k \in \Delta_k$ implies $(\vec{p}_k,1) \in \Delta_{k+1}$.
 \item \label{enu:D_m}
       $\Delta_m 
       = \left\{\left.(1,\vec{p}_{m-1})\in \{0,1\}^m \right|
         \vec{p}_{m-1} \in \Delta_{m-1}\right\}
    \sqcup
         \left\{\left.(0,0,\vec{p}_{m-2})\in \{0,1\}^m \right|
         \vec{p}_{m-2} \in \Delta_{m-2}\right\}$.
\end{enumerate}
\end{Proposition}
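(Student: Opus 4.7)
The plan is to reformulate $\Delta_k$ as follows: an element $(p_{k-1},\ldots,p_0)\in\{0,1\}^k$ lies in $\Delta_k$ if and only if every maximal run of consecutive zeros, measured immediately above either a real $1$ (some $p_{l-1}=1$ with $l\geq 1$) or the virtual $p_{-1}=1$, and possibly extending to the top of the sequence, has even length. This equivalence follows from the defining condition once one reads ``$p_{l+2t+1}=0$'' strictly, i.e., requires $l+2t+1\leq k-1$: a $0$ placed above a $1$ forces the next bit to be $0$ by the $t=0$ clause, and iteration with larger $t$ shows that any odd-length run either terminates at a $1$ (violating the condition) or runs off the top of the sequence (also violating it).

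Granting this reformulation, each part of the proposition becomes short bookkeeping. For (a), the zero-runs of $(1,\vec{p}_k)$ coincide with the zero-runs of $\vec{p}_k$, except that a run previously extending to the top is now bounded above by the new $1$; its length is unchanged and even by $\vec{p}_k\in\Delta_k$, so $(1,\vec{p}_k)\in\Delta_{k+1}$. For (b), shifting $\vec{p}_k$ up by one index and inserting a new $1$ at index $0$ only introduces a length-$0$ run between the virtual $p'_{-1}=1$ and the new $p'_0=1$, while leaving all other run-lengths unchanged, so $(\vec{p}_k,1)\in\Delta_{k+1}$.

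For (c), I would case-split on $p_{m-1}$. If $p_{m-1}=1$, deleting the top bit turns the run of zeros immediately below $p_{m-1}$ (if any) into the top run of $\vec{p}_{m-1}$, of the same (even) length; all other runs are unaffected, so $\vec{p}_{m-1}\in\Delta_{m-1}$. If $p_{m-1}=0$, then the top run of $\vec{p}_m$ has even length $\geq 2$, forcing $p_{m-2}=0$; removing these two top zeros shortens the top run by $2$, preserving evenness, so $\vec{p}_{m-2}\in\Delta_{m-2}$. The two cases are distinguished by the value of $p_{m-1}$, giving disjointness. The reverse inclusions follow from (a) for the first summand and from the analogous verification that prepending two zeros to an element of $\Delta_{m-2}$ extends its top run by $2$ and therefore preserves the even-run property.

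The only genuine subtlety is the strict reading of the defining condition at the top boundary of the sequence, which is what makes odd-length trailing zero-runs forbidden and hence matches the content of Proposition \ref{prop:Pm}. Once this reading is settled, the reformulation above is clean and the three claims become essentially immediate.
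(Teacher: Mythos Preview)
Your argument is correct. The paper itself gives no proof of this proposition beyond the sentence ``The proof is straightforward,'' so there is nothing to compare against; your even-run reformulation of $\Delta_k$ and the case analysis for (a)--(c) are exactly the kind of verification the paper leaves to the reader, and your worry about the strict reading at the top boundary is resolved by (c) itself, which excludes $(0,1,\vec{p}_{m-2})$ and hence confirms that an odd-length top run of zeros is forbidden.
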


\begin{proof}[Proof of Proposition \ref{prop:Pm}]
Let $s \in \setS_m$ with $\epsilon_{m}(s)=(s_{m-1},\dots,s_0)$.
If $s_{m-2} = 0$, then one has $s_{m-1}=1$ and $s_{m-3}=1$ by definition 
of $\setS_m$. Then one can easily see that 
$(s_{m-2},\dots,s_{0}) \in \setS_{m-3}$. If $s_{m-2}=1$, then one can
 see that $(s_{m-2},\dots,s_{0}) \in \setS_{m-1}$ as well. Hence one has
 obtained 
\begin{eqnarray}
\label{eq:S}
 \setS_{m}=\left\{\left. s+2^{m-1} \right| s \in \setS_{m-1}\right\}
  \sqcup \left\{\left. s+2^{m-1} \right| s \in \setS_{m-2}\right\}. 
\end{eqnarray}
We will show Proposition \ref{prop:Pm} by induction.
We suppose that it is true for $m-1$ and $m-2$.
Let $s \in \setS_{m-1}$ and $p=3(s+2^{m-1}) - (n+2)$.
By the hypothesis of the induction,  
$\epsilon_{m-1}(3s -(n'+2)) = \vec{p}_{m-1} \in \Delta_{m-1}$,
where $n' = 2^{m}-4$.
Since 
\begin{eqnarray*}
  p = 3(s+2^{m-1}) - (n+2) 
     =  3s - 2^{m} +2 +2^{m-1} = 3s - (n'+2) + 2^{m-1},
\end{eqnarray*}
we have
\[
 \epsilon_m(p)=(1,\vec{p}_{m-1}) \in \Delta_{m}.
\]
Similarly, let $s \in \setS_{m-2}$ and $p=3(s+2^{m-1})-(n+2)$.
By the hypothesis of the induction,  
$\epsilon_{m-2}(3s -(n''+2)) = \vec{p}_{m-2} \in \Delta_{m-2}$, 
where $n''=2^{m-1}-4$.
Since 
\begin{eqnarray*}
  p = 3(s+2^{m-1}) - (n+2) 
     = 3s - 2^{m-1} +2 = 3s - (n''+2),
\end{eqnarray*}
we have
\[
 \epsilon_m(p)=(0,0,\vec{p}_{m-2}) \in \Delta_{m}.
\]
Thus, by (\ref{eq:S}), we obtain
\[
\setP_m = 
   \left\{\left. p+2^{m-1} \right| p \in \setP_{m-1}\right\}
 \sqcup \setP_{m-2}
\]
and, by (\ref{enu:D_m}) of Proposition \ref{prop:Delta},
we have established Proposition \ref{prop:Pm}.

\end{proof}
For the last of this section, we investigate $g_{n+3}$.
Coefficients of $g_{n+3}$ can be well understood by using the binary
expansion as well as above. Let  
\label{def:S'm,Qm}
\begin{eqnarray*}
\setS'_k = \left\{s' \in \Zps \left|
\tfrac{n(k)+3}{3} \leq s' \leq \tfrac{n(k)+3}{2} \text{, }
\epsilon_k(s') = (s_{k-1},\dots,s_1,1)\,
\text{\small satisfies that if }s_j = 0,\text{\small then }s_{j+1} = 1
 \right. \right\}.
\end{eqnarray*}
Quite similarly to Lemma \ref{lem:g_{n+2}}, we can see:
\begin{Lemma}
\label{lem:g_{n+3}}
 \begin{eqnarray*}
  \binom{s'}{3s' - (n+3)}
     \equiv 1 \; \pmod{2} \text{ if and only if } s' \in \setS'_m.
 \end{eqnarray*}
\end{Lemma}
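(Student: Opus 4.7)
The plan is to imitate the proof of Lemma \ref{lem:g_{n+2}} almost verbatim, adjusting the binary bookkeeping for $r = n+3$ instead of $r = n+2$. The key observation is that $n+3 = 2^{m+1}-1$, so $\epsilon_{m+1}(n+3) = (1,1,\dots,1)$, which makes the Lucas comparison especially clean.

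First, for $s'$ in the relevant range $(n+3)/3 \leq s' \leq (n+3)/2$ I would note, as in the proof of Lemma~\ref{lem:g_{n+2}}, that $2s' \leq n+3 = 2^{m+1}-1$ is odd, so in fact $2s' \leq 2^{m+1}-2$, giving $s' \leq 2^m-1$ and hence $\epsilon_{m+1}(s') = (0, s_{m-1}, \dots, s_0)$. Also $s' \geq (2^{m+1}-1)/3 \geq 2^{m-1}$ (for $m \geq 1$) forces $s_{m-1}=1$. The crucial computation is the binary expansion of $n+3 - 2s'$: using the identity
\[
n+3 - 2s' \;=\; 2^{m+1}-1 - 2s' \;=\; 2\bigl(2^m - 1 - s'\bigr) + 1,
\]
together with $\epsilon_m(2^m - 1 - s') = (\overline{s}_{m-1}, \dots, \overline{s}_0)$, I obtain
\[
\epsilon_{m+1}(n+3 - 2s') \;=\; (\overline{s}_{m-1}, \overline{s}_{m-2}, \dots, \overline{s}_0, 1).
\]

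Next, apply Lemma \ref{lem:lucas} to $\binom{s'}{3s'-(n+3)} = \binom{s'}{n+3-2s'}$. Reading off bit by bit: the bit of $n+3-2s'$ at position $0$ is $1$, forcing $s_0 = 1$; the bit at position $m$ is $\overline{s}_{m-1}$, which must be $\leq 0$, so $s_{m-1}=1$ (consistent with what the range already gives); and for each $1 \leq i \leq m-1$, the bit at position $i$ of $n+3-2s'$ is $\overline{s}_{i-1}$, so Lucas forces the implication $\overline{s}_{i-1}=1 \Rightarrow s_i = 1$, i.e.\ $s_{i-1}=0 \Rightarrow s_i=1$. These are precisely the defining conditions of $\setS'_m$, namely that $\epsilon_m(s') = (s_{m-1}, \dots, s_1, 1)$ satisfies ``$s_j=0 \Rightarrow s_{j+1}=1$''. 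Conversely, any such $s' \in \setS'_m$ lies in the required range and makes every 1-bit of $n+3-2s'$ a 1-bit of $s'$, so Lucas yields oddness.

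I do not anticipate any real obstacle: the argument is mechanical binary-digit bookkeeping once the identity $n+3-2s' = 2(2^m-1-s')+1$ is noticed. The only mild subtlety is taking care with the top and bottom bit positions (showing $s_{m-1} = 1$ is forced by the range, and that the bottom bit $1$ of $n+3-2s'$ forces $s_0=1$, which is why the defining expansion of $\setS'_m$ fixes its last entry to $1$, in contrast to $\setS_m$).
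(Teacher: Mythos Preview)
Your proof is correct and is exactly the approach the paper intends: the paper itself gives no proof of this lemma beyond the phrase ``Quite similarly to Lemma~\ref{lem:g_{n+2}}'', and your argument is precisely that similarity made explicit, with the key identity $n+3-2s' = 2(2^m-1-s')+1$ yielding $\epsilon_{m+1}(n+3-2s') = (\overline{s}_{m-1},\dots,\overline{s}_0,1)$ and Lucas' criterion then matching the defining conditions of $\setS'_m$ bit by bit.
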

We give an explicit description of the set
\begin{eqnarray*}
\setP'_k = \left\{ \left. p' \in \Zps \right|
	       p' = 3s' - (n(k)+3), \, s' \in \setS'_k
\right\}
\end{eqnarray*}
as well. Define a map
\[
 \iota:\setS_{m-1} \rightarrow \setS'_m
\]
by $\iota(s) = 2s+1$. Then, obviously, it is bijective. Note that, for
$s' = \iota(s)$, 
\[
 3s' - (n+3) = 3\iota(s) -2^{m+1} + 1 = 6s - 2^{m+1} + 4 = 2(3s -
 (n'+2))
\]
where $n' = 2^{m} -4$.
Then we have $p \in \setP_{m-1}$ if and only if $p' \in \setP'_m$ such that 
$\epsilon_m(p') = (\vec{p}_{m-1},0)$ for 
$\epsilon_{m-1}(p) = \vec{p}_{m-1}$. Hence we have obtained:
\begin{Proposition}
\label{prop:P'm}
$ \setP'_m = \left\{\left. p \in \Zps  \right|
 \epsilon_m(p)=(\vec{p}_{m-1},0), \; 
 \vec{p}_{m-1}\in \Delta_{m-1}\right\}.$
\end{Proposition}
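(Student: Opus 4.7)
The plan is to reduce the claim to Proposition \ref{prop:Pm} applied one level down, using the bijection $\iota\colon \setS_{m-1} \to \setS'_m$, $\iota(s)=2s+1$, together with the arithmetic identity
\[
 3\iota(s) - (n+3) = 2\bigl(3s - (n'+2)\bigr), \qquad n' = 2^{m}-4,
\]
both already observed in the paragraph preceding the statement. Once these ingredients are in place, the whole argument becomes bookkeeping on binary expansions.

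First I would verify carefully that $\iota$ is a bijection. Any $s' \in \setS'_m$ has, by definition, its last binary digit equal to $1$, so $s'$ is odd and $s := (s'-1)/2$ is a non-negative integer. Using $n(m) = 2n(m-1)+4$, the range constraint $\tfrac{n(m)+3}{3} \leq s' \leq \tfrac{n(m)+3}{2}$ translates into $\tfrac{n(m-1)+2}{3} \leq s \leq \tfrac{n(m-1)+2}{2}$. The binary expansion of $s$ is obtained from that of $s'$ by dropping the trailing $1$, so the ``if $s_j=0$ then $s_{j+1}=1$'' condition on $s'$ is inherited by $s$, and one concludes $s \in \setS_{m-1}$. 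The reverse direction is analogous.

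Composing with the bijections $s \mapsto 3s - (n'+2)$ onto $\setP_{m-1}$ and $s' \mapsto 3s' - (n+3)$ onto $\setP'_m$, the displayed identity gives $\setP'_m = \{2p : p \in \setP_{m-1}\}$. Since doubling shifts binary digits by one, $\epsilon_m(2p) = (\epsilon_{m-1}(p), 0)$. Invoking Proposition \ref{prop:Pm} at level $m-1$ to identify $\setP_{m-1}$ with $\Delta_{m-1}$ via $\epsilon_{m-1}$ then yields the asserted characterization of $\setP'_m$. The only delicate point is the careful translation of the range condition and the consecutive-zeros condition through $\iota$; I expect this to be the main bookkeeping step rather than a genuine obstacle.
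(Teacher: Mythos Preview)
Your proposal is correct and follows essentially the same route as the paper: the argument the paper gives in the paragraph immediately preceding the proposition \emph{is} its proof, and you have reproduced it, supplying the details behind the phrase ``obviously, it is bijective'' and making explicit the appeal to Proposition~\ref{prop:Pm} at level $m-1$. The only cosmetic point is that the upper range bound translates to $s \le \frac{n(m)+1}{4}$ rather than exactly $\frac{n(m-1)+2}{2}$, but these agree for integers since $n(m)+1 \equiv 1 \pmod 4$; you may wish to note this.
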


\section{Gr\"{o}bner basis and cup-length}
\label{sec:grob}
In this section, by using the result of the previous section, we search
for a Gr\"{o}bner basis of $J_{n}$ in order to determine
$\cuplen(\Image p_n^*)$.
\subsection{Gr\"{o}bner bases}
We first recall the definition and some facts of Gr\"obner bases by
restricting to our specific case.
In order to clarify our discussion and to simplify notation, we shall make a
convention of identifying a two variable polynomial ring with a certain
set as follows. Let $\X=\{(p,q)\in\Zps^2|p\geq 0,q\geq 0\}$ and
let $P[\X]$ denote the set of finite subset of $\X$. 
By assigning $F\in P[\X]$ to $\sum_{(p,q)\in
F} \oriw_2^p \oriw_3^q$, we can identify $P[\X]$ with a polynomial
ring $\Zz[\oriw_2, \oriw_3]$ and we shall make this identification
throughout this section. This identification translates the operations
in $\Zz[ \oriw_2, \oriw_3]$ into $P[\X]$ as: For $F,G\in P[\X]$,
\[
 F+G=F\cup G\setminus F\cap G,
\]
\[
 F\cdot G=\sum_{(p,q)\in F,\,(r,s)\in G}(p+r,q+s).
\]
This translation of operations enables us to handle the following polynomial
calculations easily.

The order of
$\X$ is given by the usual lexicographic order. Namely, for
$(p,q),(r,s)\in\X$,
\[
 (p,q)\ge(r,s)\text{ if and only if }p>r\text{ or }p=r,q\ge s.
\]
By employing this order, we search for a Gr\"obner basis of
the ideal $J_n\subset P[\X]$. 

In order to define Gr\"obner bases, we prepare some notation and
terminology. The leading term of a polynomial $F\in
P[\X]$ is the monomial
\[
 \LT(F)=\max\{(p,q)\in F\}.
\]
If there is a monomial $(p,q)\in\X$ such that $(p,q)\cdot\LT(G)\in F$,
then the polynomial $F-(p,q)\cdot\LT(G)$ is called the remainder of $F$
on division by $G$. 
We denote the remainder $R=F-(p,q)\cdot\LT(G)$ of $F$ on division by
$G$, by
\[
 F \xrightarrow{G_*} R.
\]

Choose $F_1,\ldots,F_s\in P[\X]$ and give them an arbitrary order. Then
it is known that there is an
algorithm to provide the decomposition of $F\in P[\X]$ as
\[
 F=A_1F_1 + \cdots+ A_sF_s +R
\]
such that $A_1,\ldots,A_s\in P[\X]$ and $R$ is a linear
combination of monomials, none of which is 
divisible by each $\LT(F_1),\ldots,\LT(F_s)$. The above $R$ is called
the remainder of $F$ on division by $\{F_1,\ldots,F_s\}$ as well. 
However, this decomposition depends on the choice of an order of
$F_1,\ldots,F_s$
and $F\in (F_1,\ldots,F_s) $ does not imply the 
remainder $R=0$. We can overcome this difficulty of remainders 
by choosing a Gr\"obner basis defined as:
\begin{Definition}
Let $I$ be an ideal of $P[\X]$. A finite subset
 $G=\{G_1,\ldots,G_s\}$ is a Gr\"obner basis of $I$ if
\[
 (\{\LT(F)|F\in I\})
  = ( \LT(G_1),\ldots,\LT(G_s)).
\]
\end{Definition}
\begin{Theorem}
\label{Groebner}
Let $I$ be an ideal of $P[\X]$ and let $\{G_1,\ldots,G_s\}$ be 
 a Gr\"obner basis of $I$. Then the remainder of $F \in I$ on division
 by $\{G_1,\ldots,G_s\}$ is zero.
\end{Theorem}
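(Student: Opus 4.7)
The plan is to argue by contradiction: assume $F \in I$ but that the remainder $R$ on division by $\{G_1,\ldots,G_s\}$ is non-zero, and extract a contradiction from the defining property of a Gr\"obner basis. First I would invoke the division algorithm from the preceding discussion to produce
\[
 F = A_1 G_1 + \cdots + A_s G_s + R,
\]
with $A_i \in P[\X]$ and with the key property that no monomial appearing in $R$ is divisible by any of $\LT(G_1),\ldots,\LT(G_s)$. Because each $G_i \in I$ and $F \in I$, the element $R = F - \sum_i A_i G_i$ also lies in $I$. Supposing $R \neq 0$, its leading term $\LT(R)$ is well defined, and since $R \in I$, the Gr\"obner basis hypothesis forces
\[
 \LT(R) \in (\{\LT(H) | H \in I\}) = (\LT(G_1),\ldots,\LT(G_s)).
\]

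The only genuinely non-trivial step, and hence the main obstacle, is the lemma that a monomial lying in a monomial ideal $(m_1,\ldots,m_s) \subset P[\X]$ must be divisible by at least one of the generators $m_i$. I would establish this by expanding any representation $m = \sum_i H_i m_i$ monomial by monomial: writing each $H_i$ as a sum of terms $(p_\alpha,q_\alpha)$, the mod-$2$ count of pairs $(i,\alpha)$ for which $(p_\alpha,q_\alpha)\cdot m_i = m$ must be odd in order to produce the single monomial $m$ on the left-hand side, so at least one such pair exists and gives a divisibility $m_i \mid m$. Applying this lemma to $\LT(R)$ furnishes an index $i$ with $\LT(G_i) \mid \LT(R)$, which contradicts the construction of $R$ (no monomial of $R$, and in particular not $\LT(R)$, is divisible by any $\LT(G_i)$); hence $R = 0$, as desired.
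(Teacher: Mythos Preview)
Your argument is correct and is exactly the standard proof of this basic property of Gr\"obner bases. Note, however, that the paper itself does not prove Theorem~\ref{Groebner}: it is quoted as background from the general theory (the nearby citation \cite{CLO} points to Cox--Little--O'Shea, where this and Buchberger's criterion are developed), so there is no proof in the paper to compare your proposal against.
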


Buchberger \cite{CLO} gave a criterion for a set of polynomials being a
Gr\"{o}bner basis of the ideal generated by it as follows.
For $F,G\in P[\X]$, the least common multiple of $F$ and $G$ is the
monomial
\[
 \LCM(F,G)=(\max\{p,r\},\max\{q,s\}),
\]
where $\LT(F)=(p,q)$ and $\LT(G)=(r,s)$. The $S$-polynomial of $F$ and
$G\in P[\X]$ is 
\[
S(F,G)=\frac{\LCM(F,G)}{\LT(F)}F+\frac{\LCM(F,G)}{\LT(G)}G.
\]

\begin{Theorem}[\cite{CLO}]
\label{Buchberger}
The set of polynomials $\{G_1,\ldots,G_s\}\subset P[\X]$ is a Gr\"obner
 basis of the ideal $(G_1,\ldots,G_s)$ if and only if the remainder of
 $S(G_i,G_j)$ on division by $\{G_1,\ldots,G_s\}$ is zero for each $i\ne j$.
\end{Theorem}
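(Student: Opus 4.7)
The plan is to prove Buchberger's criterion by handling the two implications separately. The forward direction is immediate from Theorem~\ref{Groebner}: if $\{G_1,\ldots,G_s\}$ is a Gr\"obner basis of $I=(G_1,\ldots,G_s)$, then each $S$-polynomial $S(G_i,G_j)$ lies in $I$, so Theorem~\ref{Groebner} forces the remainder on division by $\{G_1,\ldots,G_s\}$ to be zero.

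The substantive direction is the converse. Assume every $S(G_i,G_j)$ reduces to zero modulo $\{G_1,\ldots,G_s\}$. My goal is to show that for every $F\in I$, $\LT(F)$ is divisible by some $\LT(G_i)$; the reverse inclusion of ideals of leading terms being trivial, this gives $(\{\LT(F)\mid F\in I\}) = (\LT(G_1),\ldots,\LT(G_s))$.

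Fix $F\in I$ and consider all representations $F=\sum_{i=1}^{s} A_iG_i$ with $A_i\in P[\X]$. For each representation set $\delta=\max_i\LT(A_iG_i)$, and among all representations choose one minimizing $\delta$; this is legitimate because the lexicographic order on $\X$ is a well-order. Clearly $\LT(F)\le \delta$, and if equality holds then $\LT(F)=\LT(A_{i_0}G_{i_0})$ for some $i_0$, so $\LT(G_{i_0})$ divides $\LT(F)$, as desired. The crux is to derive a contradiction from $\LT(F)<\delta$.

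In that case the $\delta$-leading terms among the summands $A_iG_i$ with $\LT(A_iG_i)=\delta$ must cancel in pairs. The key observation I would invoke is the standard \emph{cancellation lemma}: any sum of monomial multiples $\sum_{i\in I_\delta}c_i\cdot G_i$ whose leading terms all equal $\delta$ and sum to zero can be rewritten as a combination of $S$-polynomials $S(G_i,G_j)$, $i,j\in I_\delta$, with coefficients whose leading monomials are strictly smaller than $\delta$. Now the hypothesis supplies, for each such pair, an expression $S(G_i,G_j)=\sum_k B_{ij,k}G_k$ with $\LT(B_{ij,k}G_k)<\LT(S(G_i,G_j))\le \LCM(\LT(G_i),\LT(G_j))$, hence strictly less than $\delta$ after multiplication by the small cofactors above. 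Substituting these expressions into the rewriting of the cancelling $\delta$-part produces a new representation $F=\sum_i A_i'G_i$ with $\max_i\LT(A_i'G_i')<\delta$, contradicting the minimality of $\delta$.

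The main obstacle I anticipate is the cancellation lemma itself and its bookkeeping: one needs to verify, carefully and in our combinatorial setup $P[\X]$ where addition is symmetric difference, that the degree bound $<\delta$ on the coefficients in the $S$-polynomial rewriting is preserved after substitution of the reductions of the $S(G_i,G_j)$. This telescoping degree estimate is where the whole argument turns, and once it is done the contradiction and hence the theorem follow immediately.
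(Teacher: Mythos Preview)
The paper does not actually prove this theorem: it is stated with a citation to \cite{CLO} and used as a black box, with no proof given. Your sketch is the standard textbook argument (essentially the one in the cited reference), so in effect you have outlined the proof that the paper chose to omit; there is nothing to compare beyond that.
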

\subsection{Search for a Gr\"obner basis of $\J_{n}$}
The author found the following polynomials experimentally by a computer
calculation. For non-negative integers $i,t$ with 
$t - 2(2^m - 2^i) \equiv 0 \pmod 3$, we
define a polynomial $P(t,i)$ by
\begin{alignat*}{1}
 P(t,i) &= 
\Biggl\{\left(p,\tfrac{t-2p}{3} \right) \in \X \Bigg|
\epsilon_m(p) = (\p_{m-i},\overbrace{0,\dots,0}^{i}), \;
  \p_{m-i} \in \Delta_{m-i} \Biggr\}, \\
 \Poly{i} &= P(2^i + n + 1,i).
\end{alignat*}
%
%
We shall prove that $\{P_0,\ldots,P_m\}$ is a Gr\"obner basis of $J_{n}$.

In order to investigate $P_i$, we define the following sets which will
be useful for expression. Let $\Delta(i,j,l)$ and $\bar\Delta(i,l)$ be
\label{def:Delta2}
 \begin{alignat*}{1}
  \Delta(i,j,l) &=
   \Big\{(\p_{m-j},\p_{j-i-l},\overbrace{1,\dots,1}^{l},
   \overbrace{0,\dots,0}^{i}) \in \{0,1\}^m
   \Big|(\p_{m-j},\p_{j-i-l}) \in \Delta_{m-i-l},\, \p_{j-i-l}
   \neq (1,\dots,1) \Big\},\\
  \bar{\Delta}(i,l) &=
   \Big\{(\p_{m-i-l-2},0,0,\overbrace{1,\dots,1}^{l},
   \overbrace{0,\dots,0}^{i}) \in \{0,1\}^m
   \Big|\p_{m-i-l-2} \in \Delta_{m-i-l} \Big\}.
 \end{alignat*}
It is easy to check:
\begin{Lemma}
\label{lem:Delta}
\[
 \Delta(i,j,l) = \bar{\Delta}(i,l) \sqcup \Delta(i,j,l+1).
\]
\end{Lemma}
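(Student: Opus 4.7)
The plan is a case analysis on the bit $p_{l+i}$, namely the lowest bit of the middle block $\vec{p}_{j-i-l}$ in a generic element $(\vec{p}_{m-j},\vec{p}_{j-i-l},1^l,0^i)\in\Delta(i,j,l)$. Throughout I would use two elementary equivalences that follow directly from the defining implication of $\Delta_k$ together with the convention $p_{-1}=1$: first, $(\vec{q},1)\in\Delta_{k+1}\Leftrightarrow\vec{q}\in\Delta_k$ (appending a $1$ at the low end sets the trailing-zero count to $0$, which is even, and preserves every interior run, so this is essentially the iff version of Proposition \ref{prop:Delta}(\ref{enu:(p_k,1)})), and second, $(\vec{q},0,0)\in\Delta_{k+2}\Leftrightarrow\vec{q}\in\Delta_k$ (the total number of trailing zeros must be even, so pairs of trailing zeros can be added or removed freely without affecting the $\Delta$-condition).

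In the first case $p_{l+i}=1$, I would write $\vec{p}_{j-i-l}=(\vec{p}\,',1)$ so that the tail becomes $1^{l+1}0^i$. The requirement $(\vec{p}_{m-j},\vec{p}_{j-i-l})\in\Delta_{m-i-l}$ then becomes $(\vec{p}_{m-j},\vec{p}\,')\in\Delta_{m-i-l-1}$ by the first equivalence, while $\vec{p}_{j-i-l}\ne(1,\dots,1)$ becomes $\vec{p}\,'\ne(1,\dots,1)$. Hence these elements correspond bijectively to the elements of $\Delta(i,j,l+1)$.

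In the second case $p_{l+i}=0$, I would re-index $(\vec{p}_{m-j},\vec{p}_{j-i-l})$ so its own position $0$ sits at $p_{l+i}$; then $p_{-1}=1$ and $p_0=0$, and applying the defining implication of $\Delta_{m-i-l}$ with $t=0$ forces $p_1=0$ as well. Translating back, the bit at position $l+i+1$ is also $0$, hence $\vec{p}_{j-i-l}=(\vec{p}\,'',0,0)$ and the condition $\vec{p}_{j-i-l}\ne(1,\dots,1)$ is automatic. The second equivalence then reduces $(\vec{p}_{m-j},\vec{p}\,'',0,0)\in\Delta_{m-i-l}$ to $(\vec{p}_{m-j},\vec{p}\,'')\in\Delta_{m-i-l-2}$, which is precisely the defining condition of $\bar{\Delta}(i,l)$ with $\vec{p}_{m-i-l-2}=(\vec{p}_{m-j},\vec{p}\,'')$.

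The two cases are clearly disjoint since $p_{l+i}=1$ in the first but $p_{l+i}=0$ in the second, so their union exhausts $\Delta(i,j,l)$ disjointly and yields $\Delta(i,j,l)=\bar{\Delta}(i,l)\sqcup\Delta(i,j,l+1)$. The only real obstacle is bookkeeping the index shifts between $\Delta_{m-i-l}$, $\Delta_{m-i-l-1}$, and $\Delta_{m-i-l-2}$; once the two elementary equivalences above are recorded, the rest of the argument is mechanical.
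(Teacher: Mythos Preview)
Your argument is correct, and it is exactly the natural verification the paper has in mind: the paper gives no proof at all beyond ``It is easy to check,'' and your case split on the bit $p_{i+l}$, together with the two equivalences $(\vec q,1)\in\Delta_{k+1}\Leftrightarrow\vec q\in\Delta_k$ and $(\vec q,0,0)\in\Delta_{k+2}\Leftrightarrow\vec q\in\Delta_k$, is precisely how one unpacks the definitions. (In passing, note that the paper's definition of $\bar\Delta(i,l)$ has $\vec p_{m-i-l-2}\in\Delta_{m-i-l}$, which is a type mismatch and should read $\Delta_{m-i-l-2}$; you correctly work with the intended version.)
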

Let us begin investigating $P_i$. It is easy to verify that
\begin{eqnarray}
\label{eq:LT(P)}
\LT(P_i)=(2^m-2^i,2^i-1).
\end{eqnarray}
\begin{Proposition}
\label{prop:I=(P)}
 We have $\Poly{0},\dots,\Poly{m} \in \J_{n}$.
In particular $\Poly{0} = g_{n+2}$, $\Poly{1}= g_{n+3}$.
\end{Proposition}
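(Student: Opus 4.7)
The ``in particular'' clause is essentially tautological from the preceding sections. By definition $P_0 = P(n+2,0)$ consists of the monomials $\oriw_2^p\oriw_3^{(n+2-2p)/3}$ with $\epsilon_m(p) \in \Delta_m$; by Proposition \ref{prop:Pm} these are indexed by $\setP_m$, so Lemma \ref{lem:g_{n+2}} gives $P_0 = g_{n+2}$. Likewise $P_1 = P(n+3,1)$ is indexed by $p$ with $\epsilon_m(p) = (\vec{p}_{m-1},0)$ and $\vec{p}_{m-1} \in \Delta_{m-1}$, which by Proposition \ref{prop:P'm} is exactly $\setP'_m$, so Lemma \ref{lem:g_{n+3}} yields $P_1 = g_{n+3}$. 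In particular $P_0, P_1 \in J_{n}$.

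To handle the remaining $P_i$ with $2 \leq i \leq m$, my plan is to prove the stronger identity $P_i = g_{2^i+n+1}$, where $g_r \in \Zz[\oriw_2,\oriw_3]$ is defined by extending formula (\ref{eq:g}) to every $r \geq 0$. Once this identification is established, membership follows from a standard recursive argument: the generating function identity $(1+\oriw_2+\oriw_3)\sum_r g_r = 1$ yields the recursion $g_r = \oriw_2 g_{r-2} + \oriw_3 g_{r-3}$ for all $r \geq 1$, and since $g_{n+1} = 0$ by Proposition \ref{prop:g_{n+1}} while $g_{n+2} = P_0$ and $g_{n+3} = P_1$ lie in $J_{n}$, an easy induction on $r \geq n+2$ gives $g_r \in J_{n}$ for every such $r$.

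The combinatorial identification $P_i = g_{2^i+n+1}$ for $i \geq 2$ is the technical crux, which I would approach via Lucas's theorem. After the substitution $p = 3s-r$, $q = r-2s$, Lemma \ref{lem:lucas} says the non-zero monomials of $g_r$ are exactly those $\oriw_2^p\oriw_3^q$ with $2p+3q = r$ and with the binary expansions of $p$ and $q$ having disjoint support. With $r = 2^i+n+1 = 2^{m+1} + 2^{i-1}+\cdots+2^2+2^0$ (for $i \geq 2$), I would analyze bits from the low end: the integer equation $2p+3q = r$ combined with the disjointness of supports forces $q$ to be odd, $p$ to be even, and, iterating upward through bit $i-1$ with careful carry bookkeeping, forces bits $0,1,\dots,i-1$ of $p$ to vanish. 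The residual constraint on the remaining bits of $p/2^i$ would then be matched with membership in $\Delta_{m-i}$ by induction on $m-i$ using the recursive splitting supplied by Proposition \ref{prop:Delta}(c). The main obstacle is precisely this carry analysis: because $2p+3q = r$ is an equation in $\Z$ rather than in $\Zz$, carries propagate non-trivially, and the condition defining $\Delta_{m-i}$, namely that every maximal block of zeros between ones has even length, is exactly what makes the carry structure consistent with the binary disjointness of $p$ and $q$.
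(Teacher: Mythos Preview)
Your identification $P_i = g_{2^i+n+1}$ is correct (one can check small cases, and it is consistent with the paper's computations), and your recursion argument for $g_r \in J_n$ is standard and sound. So the overall strategy works. However, the route is genuinely different from the paper's, and the trade-off is worth noting.

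The paper does not identify $P_i$ with any $g_r$ for $i\ge 2$. Instead it computes the $S$-polynomial $S(P_i,P_{i+1}) = \oriw_3^{2^i}P_i + \oriw_2^{2^i}P_{i+1}$ directly as a subset of $\X$ and observes, via the combinatorics of $\Delta_k$, that this set is exactly $P_{i+2}$. Since $P_0,P_1\in J_n$, the relation $S(P_i,P_{i+1})=P_{i+2}$ gives $P_i\in J_n$ for all $i$ by a two-step induction. This bypasses your bit-by-bit carry analysis entirely: no equation $2p+3q=r$ in $\Z$ is ever unpacked. Moreover, the same $S$-polynomial computation is immediately recycled to verify Buchberger's criterion in Theorem~\ref{Groebner-I}, so the paper gets two results for the price of one calculation.

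Your approach, by contrast, yields the extra information that each $P_i$ is literally a homogeneous component of $(1+\oriw_2+\oriw_3)^{-1}$, which is conceptually pleasant. The cost is the carry bookkeeping you flag as the ``technical crux'': you would need to show that disjoint binary supports together with $2p+3q = 2^{m+1}+2^i-3$ force the low $i$ bits of $p$ to vanish and the remaining bits to satisfy the $\Delta_{m-i}$ condition. This is doable (your sketch of the first two bits is on the right track, and the recursive structure of $\Delta$ in Proposition~\ref{prop:Delta}(\ref{enu:D_m}) does govern the induction), but it is noticeably more work than the paper's one-line $S$-polynomial identity, and you have not actually carried it out.
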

\begin{proof}
By Proposition \ref{prop:Pm} and Proposition \ref{prop:P'm},
 one has $P_0=g_{n+2}$ and $P_1=g_{n+3}$. 
For $i<j$, it follows from (\ref{eq:LT(P)}) that

\begin{alignat*}{1}
S(\Poly{i},\Poly{j}) 
 =\,&  (0,2^j - 2^i) \cdot \Poly{i}+ (2^j-2^i,0) \cdot \Poly{j}\\
 =\,& \Biggl\{\left.\left(p,q_{i,j}\right)
     \in \X \right|\epsilon_m(p) = (\p_{m-j},\p_{j-i},
      \overbrace{0,\dots,0}^{i}), \;
     (\p_{m-j},\p_{j-i}) \in \Delta_{m-i}\Biggr\}\\
 &+ \Biggl\{\left.\left(p,q_{i,j}\right)
    \in \X \right|\epsilon_m(p)= (\p_{m-j},
 \overbrace{1,\dots,1}^{j-i},\overbrace{0,\dots,0}^{i}), \;
     \p_{m-j} \in \Delta_{m-j}\Biggr\}\\
 =\,& \left\{\left.\left(p,q_{i,j}(p)\right)
 \in \X \right|\epsilon_m(p) \in \Delta(i,j,0)\right\},
\end{alignat*}
where
\[
  q_{i,j}(p)= \frac{3\cdot 2^j -2\cdot 2^i + n + 1-2p}{3}.
\]
By the definition of
 $\Delta_k$, one can easily see that
 $\Delta(i,0,i+1)=\Delta_{m-i-2}$. Then it follows that
$S(P_i,P_{i+1})=P_{i+2}$
and hence we have established Proposition \ref{prop:I=(P)}.
\end{proof}
%
We calculate the remainders of $S(\Poly{i},\Poly{j})$ on
division by $\{P_0,\ldots,P_m\}$.
\begin{Lemma}
 \label{lem:Q->Q}
The remainder of $Q_{i,j,l}=\left\{\left.\left(p,q_{i,j}(p)\right)
     \in \X \right|\epsilon_m(p) \in \Delta(i,j,l)\right\}$ 
on division by $\Poly{i+l+2}$ is $Q_{i,j,l+1}$.
\end{Lemma}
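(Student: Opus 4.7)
The plan is to apply Lemma \ref{lem:Delta} to split $Q_{i,j,l}$ into two pieces, identify the $\bar{\Delta}(i,l)$-indexed piece as an explicit monomial multiple of $P_{i+l+2}$, and verify that the remaining piece $Q_{i,j,l+1}$ contains no monomial divisible by $\LT(P_{i+l+2})$. First, by Lemma \ref{lem:Delta} I would write $Q_{i,j,l} = \bar{Q} + Q_{i,j,l+1}$, where $\bar{Q}$ collects the $(p,q_{i,j}(p)) \in Q_{i,j,l}$ with $\epsilon_m(p) \in \bar{\Delta}(i,l)$.

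Next, to exhibit the factorization, I would unpack binary expansions. An element $(\p_{m-i-l-2},0,0,1^l,0^i) \in \bar{\Delta}(i,l)$ has integer value $p = p' + (2^{i+l}-2^i)$, where $\epsilon_m(p') = (\p_{m-i-l-2},\, 0^{i+l+2})$; these $p'$ are precisely the first coordinates of the monomials of $P_{i+l+2}$. Substituting $n = 2^{m+1}-4$ into the formulas for $q_{i,j}(p)$ and for the second coordinate $q'(p')=(2^{i+l+2}+n+1-2p')/3$ of $P_{i+l+2}$ will give the shift
\[
 q_{i,j}(p) - q'(p') \;=\; \frac{3\cdot 2^j - 2^{i+1} - 2^{i+l+2} - 2(2^{i+l}-2^i)}{3} \;=\; 2^j - 2^{i+l+1}.
\]
Hence I would conclude $\bar{Q} = (2^{i+l}-2^i,\; 2^j - 2^{i+l+1}) \cdot P_{i+l+2}$, with both coordinates of the monomial factor nonnegative because $j \geq i+l+1$ in the relevant range.

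Finally, I would check that no monomial of $Q_{i,j,l+1}$ is divisible by $\LT(P_{i+l+2}) = (2^m-2^{i+l+2},\, 2^{i+l+2}-1)$: any such $p$ has the form $(\p_{m-j},\p_{j-i-l-1},1^{l+1},0^i)$ with $\p_{j-i-l-1} \neq (1,\ldots,1)$, and the $\Delta$-condition (even zero-gaps between consecutive $1$'s) forces a $0$ among the top $m-i-l-2$ bits, so $p < 2^m - 2^{i+l+2}$ and divisibility fails in the first variable. The hard part in the plan will be the bijection underlying the factorization step, identifying the admissible $\p_{m-i-l-2}$ in $\bar{\Delta}(i,l)$ with those parameterizing $P_{i+l+2}$; it rests on the elementary observation that appending two trailing zeros preserves $\Delta$-membership, since the parities of zero-gaps between consecutive $1$'s are unaffected.
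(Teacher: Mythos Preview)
Your proposal is correct and follows essentially the same route as the paper: both split $Q_{i,j,l}$ via Lemma~\ref{lem:Delta} and identify the $\bar\Delta(i,l)$-piece as $(2^{i+l}-2^i,\,2^j-2^{i+l+1})\cdot P_{i+l+2}$, which is exactly the monomial multiple the paper uses. The only differences are expository: the paper first pins down $\LT(Q_{i,j,l})$ explicitly and then computes the single reduction step, whereas you go straight to the factorization of $\bar Q$; and your final non-divisibility check on $Q_{i,j,l+1}$ is an extra step the paper omits, since in the paper ``remainder'' means one division step (so showing $(2^{i+l}-2^i,\,2^j-2^{i+l+1})\cdot\LT(P_{i+l+2})\in Q_{i,j,l}$ already suffices).
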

\begin{proof}
Let $p(i,l)$ be
 $\epsilon_m(p(i,l))=(\overbrace{1,\ldots,1}^{m-i-l-2},0,0,
 \overbrace{1,\ldots,1}^l,\overbrace{0,\ldots,0}^i)$. 
Then it is easy to see
\begin{eqnarray*}
\LT(Q(i,j,l))=
\left(p(i,l),q_{i,j}(p(i,l))\right)
\end{eqnarray*} 
and it follows from (\ref{eq:LT(P)}) that
\begin{eqnarray*}
 \LT(\Poly{i+l+2}) =
 \left(p(i+l,0),q_{i+j+2,i+j+2}(p(i+l,0))\right).
\end{eqnarray*}
Hence we have
\begin{eqnarray*}
(2^{i+l}-2^i,2^j-2^{i+l+1}) \cdot \LT(\Poly{i+l+2}) = Q(i,j,l).
\end{eqnarray*}
On the other hand, one can easily check that
\begin{eqnarray*}
(2^{i+l}-2^i,2^j-2^{i+l+1}) \cdot \Poly{i+l+2}
 = \left\{\left.\left(p,q_{i,j}(p)\right)
     \in \X \right|\epsilon_m(p)   \in \bar{\Delta}(i,l)\right\}
\end{eqnarray*}
and then it follows from Lemma \ref{lem:Delta} that
\begin{eqnarray*}
Q(i,j,l)
\xrightarrow{{\Poly{i+l+2}}_*}
Q(i,j,l)
+(2^{i+l}-2^i,2^j-2^{i+l+1}) \cdot \Poly{i+l+2}
=
Q(i,j,l+1).
\end{eqnarray*}
\end{proof}
\begin{Theorem}
\label{Groebner-I}
 The set $\{\Poly{0},\dots,\Poly{m}\}$ is a Gr\"obner basis of 
$\J_{n}$.
\end{Theorem}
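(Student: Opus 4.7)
The plan is to invoke Buchberger's criterion (Theorem~\ref{Buchberger}): it suffices to check that, for every pair $0 \le i < j \le m$, the remainder of $S(P_i,P_j)$ on division by $\{P_0,\ldots,P_m\}$ is zero. Combined with Proposition~\ref{prop:I=(P)}, which already records $P_0 = g_{n+2}$, $P_1 = g_{n+3}$ and $P_k \in J_n$ for all $k$, this will simultaneously confirm that $(P_0,\ldots,P_m) = J_n$ and that the system is a Gr\"obner basis.

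All the necessary inputs are already in place. The explicit $S$-polynomial calculation performed inside the proof of Proposition~\ref{prop:I=(P)} presents each $S$-polynomial directly as
\[
 S(P_i,P_j) = Q(i,j,0),
\]
so the $S$-polynomials are, by construction, instances of the objects $Q(i,j,l)$ analyzed in Lemma~\ref{lem:Q->Q}. By that lemma, the remainder of $Q(i,j,l)$ on division by $P_{i+l+2}$ is $Q(i,j,l+1)$, and iterating produces a division chain
\[
 S(P_i,P_j) = Q(i,j,0) \xrightarrow{(P_{i+2})_*} Q(i,j,1) \xrightarrow{(P_{i+3})_*} Q(i,j,2) \xrightarrow{(P_{i+4})_*} \cdots .
\]

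Termination of this chain is combinatorial: as $l$ grows, the free sub-tuple $\vec{p}_{j-i-l}$ appearing in the definition of $\Delta(i,j,l)$ shrinks, and the restriction $\vec{p}_{j-i-l} \neq (1,\ldots,1)$ eventually excludes every surviving configuration, forcing $\Delta(i,j,l) = \emptyset$ and hence $Q(i,j,l) = 0$. In the generic range $i < j < m$, the collapse occurs exactly at the step that calls for the reducer $P_{j+1}$, which belongs to the proposed basis, and there is nothing more to do.

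The main obstacle I expect is the boundary case $j = m$. There the ``natural'' last reducer $P_{j+1} = P_{m+1}$ lies outside $\{P_0,\ldots,P_m\}$, so the iteration produced by Lemma~\ref{lem:Q->Q} cannot be pushed to its formal endpoint. For this case one has to examine $\Delta(i,m,l)$ by hand: the outer slot $\vec{p}_{m-j}$ is empty, so the set reduces to those $\vec{p}_{m-i-l} \in \Delta_{m-i-l}$ distinct from $(1,\ldots,1)$, and one must argue that the resulting monomial either vanishes or is divisible by $\LT(P_k)$ for some $k\le m$ (plausibly $P_m$, by a direct comparison of leading terms using \eqref{eq:LT(P)}). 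Once this boundary verification is completed, every $S$-polynomial reduces to zero, and Buchberger's criterion delivers Theorem~\ref{Groebner-I}.
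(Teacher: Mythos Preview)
Your approach is exactly the paper's: write $S(P_i,P_j)=Q(i,j,0)$, run the chain of Lemma~\ref{lem:Q->Q}, and apply Buchberger's criterion (Theorem~\ref{Buchberger}).

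The boundary case $j=m$ that you flag as an obstacle is not one. When $l=m-i-1$ the free block $\vec{p}_{j-i-l}$ has length~$1$; but $\Delta_1=\{(1)\}$ (immediately from $\setP_1=\{1\}$, or from the recursion in Proposition~\ref{prop:Delta}(\ref{enu:D_m})), so the constraint $\vec{p}_1\neq(1)$ already forces $\Delta(i,m,m-i-1)=\emptyset$ and hence $Q(i,m,m-i-1)=0$. Thus for $j=m$ the chain terminates one step earlier than in the generic case, with $P_m$ as the last reducer; no $P_{m+1}$ is needed, and no separate divisibility argument is required. (The paper's displayed chain formally writes $P_{j+1}$ at the end for all $j$, which is harmless for $j=m$ precisely because the preceding $Q$ is already zero.)
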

\begin{proof}
 By Proposition \ref{prop:I=(P)}, we have $J_{n}=(
 \Poly{0},\dots,\Poly{m} )$.
As in the proof of Proposition \ref{prop:I=(P)}, we have 
$S(\Poly{i},\Poly{j})=Q(i,j,0)$ and
 then it follows from Lemma \ref{lem:Q->Q} that, for $i<j$,
\[
 S(\Poly{i},\Poly{j})=Q(i,j,0) \xrightarrow{{\Poly{i+2}}_*}
 Q(i,j,1) \xrightarrow{{\Poly{i+3}}_*} \cdots
 \xrightarrow{{\Poly{j}}_*}
 Q(i,j,j-i-1) \xrightarrow{{\Poly{j+1}}_*} 0.
\]
\end{proof}
\subsection{Cup-length of $\Image p_n^*$}
In order to determine $\cuplen(\Image p_n^*)$, 
let us introduce new polynomials.
For non-negative integers $i,j,s$ with $s-2^{m+1}+2^{i+1} \equiv 0 \pmod 3$,
we define a polynomial $\hat{P}(s,i,j)$ by
\begin{eqnarray*}
   \hat{P}(s,i,j) = 
  \left\{\left.\left(p,\tfrac{s-2p}{3}\right)
     \in \X \right|\epsilon(p) 
     \in \bar{\Delta}(i,j) \right\} .
 \end{eqnarray*}
Then we have
\begin{eqnarray}
\label{eq:P^}
\LT(\Poly{i}) 
         = P(2^i+n+1,i) + P(2^{i}+n+1,i+2) 
	 + \sum_{1 \leq j \leq m-i-2}\hat{P}(2^i+n+1,i,j).  
\end{eqnarray}

In order to investigate $\cuplen(\Image p_n^*)$, we shall calculate
$\min\{p|(p,0)\cdot \LT(\Poly{i}) \in \J_{n}\}$ for each $i$ 
as follows.
\begin{Lemma}
\label{lem:alpha-P}
Let $\alpha_i = \min\{\alpha|(\alpha,0)\cdot P(t,i) \in \J_{n}\}$
 for non-negative integers $i,t$ with 
\begin{eqnarray*}
 2^{i-2} + n + 1 \leq t < 2^{i} + n + 1, \, 
      t-2(2^m-2^i) \equiv 0 \pmod 3.
\end{eqnarray*}
Then we have
$  \alpha_i = 2^m-2^{i-1}.$ In particular, $\alpha_{i}$ is independent 
from $t$ as above.
\end{Lemma}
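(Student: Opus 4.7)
My plan for Lemma~\ref{lem:alpha-P} is to establish the two inequalities $\alpha_i \le 2^m - 2^{i-1}$ and $\alpha_i \ge 2^m - 2^{i-1}$ separately, with the claimed independence from $t$ emerging from the fact that the threshold $2^m - 2^{i-1}$ is the $p$-coordinate of $\LT(\Poly{i-1})$, a quantity depending only on $m$ and $i$. Throughout I will test membership in $J_n$ by Gr\"{o}bner division against $\{\Poly{0}, \ldots, \Poly{m}\}$, which is legitimate by Theorems~\ref{Groebner} and~\ref{Groebner-I}.

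For the upper bound, the plan is to exhibit a cascading Gr\"{o}bner reduction of $(2^m - 2^{i-1}, 0) \cdot P(t, i)$ to $0$. First I would note that the leading monomial has coordinates $(2^{m+1} - 3 \cdot 2^{i-1},\, q_0)$ with $q_0 = (t - 2(2^m - 2^i))/3$; the hypothesis $t \ge 2^{i-2} + n + 1$ together with $n = 2^{m+1} - 4$ gives $q_0 \ge 3 \cdot 2^{i-2} - 1 \ge 2^{i-1} - 1$ for $i \ge 2$, which makes this monomial divisible by $\LT(\Poly{i-1}) = (2^m - 2^{i-1},\, 2^{i-1} - 1)$. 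After subtracting $(2^m - 2^i,\, q_0 - 2^{i-1} + 1) \cdot \Poly{i-1}$, the remaining terms arise from the non-leading monomials of $\Poly{i-1}$ shifted appropriately, and the recursive decomposition from Proposition~\ref{prop:Delta}(c) lines these up so that each new leading term is itself divisible by the next Gr\"{o}bner basis element in the queue, producing a telescoping cascade in the spirit of Lemma~\ref{lem:Q->Q} that terminates at~$0$.

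For the lower bound, the plan is to show that whenever $\alpha < 2^m - 2^{i-1}$, the Gr\"{o}bner normal form of $(\alpha, 0) \cdot P(t, i)$ contains a nonzero standard monomial (i.e.\ one not divisible by any $\LT(\Poly{j})$). I would track the reduction starting from the leading monomial $(\alpha + 2^m - 2^i,\, q_0)$: when it is reduced by $\LT(\Poly{i-1})$, the non-leading monomials of $\Poly{i-1}$ contribute new terms whose smallest-$p$ member has $p$-coordinate of the form $\alpha + p^{*} - (2^m - 2^{i-1})$ for some small $p^{*} \ge 0$ arising from a non-maximal element of $\Delta_{m-i+1}$. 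For $\alpha < 2^m - 2^{i-1}$, this coordinate falls in a range that, combined with the corresponding $q$-coordinate, lands the monomial inside the standard region. I would then argue that this surviving monomial cannot be absorbed by reductions of the other monomials of $(\alpha, 0) \cdot P(t, i)$ by appealing to the mod-$3$ constraint $t \equiv 2(2^m - 2^i) \pmod 3$ together with the recursive structure of $\Delta_k$, which keep the possible cancellations combinatorially disjoint.

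The main obstacle I expect is the detailed term-by-term bookkeeping in both halves: in the upper bound, verifying that each stage of the cascade produces exactly the input needed for the next step so that the telescoping collapses to $0$ without stray terms; in the lower bound, ruling out accidental cancellation between the candidate surviving standard monomial and the reductions of the other monomials of $(\alpha, 0) \cdot P(t, i)$. Both tasks hinge on a careful interplay between the recursive decomposition of $\Delta_k$ in Proposition~\ref{prop:Delta} and the structure of the Gr\"{o}bner basis, with the mod-$3$ parity of $t$ playing the role of keeping the indexing coherent.
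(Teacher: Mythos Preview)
Your upper-bound cascade is precisely the paper's argument: the paper proves the identity $(2^{i-1},0)\cdot P(t,i) + \bigl(0,\tfrac{t+2^{i-1}-n-1}{3}\bigr)\cdot P_{i-1} = P(t+2^i,i+1)$ from Proposition~\ref{prop:Delta} and then iterates---your telescoping, with the multiplication applied one step at a time rather than front-loaded.

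For the lower bound your plan to trace a surviving standard monomial through a full normal-form computation is more delicate than necessary, and the cancellation bookkeeping you flag as the main obstacle can be bypassed entirely. Each cascade step is an \emph{equality} modulo a multiple of some $P_j\in J_n$, so for $\alpha\ge 2^{i-1}$ one obtains
\[
(\alpha,0)\cdot P(t,i)\;\equiv\;(\alpha-2^{i-1},0)\cdot P(t+2^i,\,i+1)\pmod{J_n},
\]
and one checks that $t+2^i$ again lies in the admissible range for index $i+1$. This turns the lower bound into a clean downward induction on $i$. The base case $i=m$ is immediate: $P(t',m)$ is the single monomial $(0,t'/3)$ with $2^{m-1}-1\le t'/3 < 2^m-1$, so $(\beta,t'/3)$ is a standard monomial exactly when $\beta<2^{m-1}$. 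For the inductive step, if $\alpha<2^{i-1}$ then the leading monomial $(\alpha+2^m-2^i,\,q_0)$ of $(\alpha,0)\cdot P(t,i)$ is already standard (for $j\ge i$ its $q$-coordinate $q_0<2^i-1$ is too small to be divisible by $\LT(P_j)$, while for $j\le i-1$ divisibility would force $\alpha\ge 2^i-2^j\ge 2^{i-1}$), hence survives in the normal form; if $\alpha\ge 2^{i-1}$ one applies the congruence and the inductive hypothesis. No term-by-term cancellation analysis is needed.
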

\begin{proof}
Note that
 \begin{alignat*}{1}
  (2^{i-1},0)\cdot P(t,i) =\,& 
  \Biggl\{\left.\left(p,\tfrac{t+2^{i}-2p}{3}\right)
     \in \X \right|
  \epsilon_m(p) = (\p_{m-i}1,0,\dots,0) \in \{0,1\}^m, \,  
     (\p_{m-i},1) \in \Delta_{m-i+1}\Biggl\},\\
  \left(0,\tfrac{t+2^{i-1} -n-1}{3}\right)\cdot \Poly{i-1} =\,&
  \Biggl\{\left.\left(p,\tfrac{t+2^{i}-2p}{3}\right)
     \in \X \right|
  \epsilon_m(p) = (\p_{m-i+1},0,\dots,0) \in \{0,1\}^m, \, 
     \p_{m-i+1} \in \Delta_{m-i+1}\Biggl\}.
 \end{alignat*}
By Proposition \ref{prop:Delta}, we have
\[
(2^{i-1},0)\cdot P(t,i) 
 + \left(0,\tfrac{t+2^{i-1} -n-1}{3}\right)\cdot \Poly{i-1} 
 = P(t+2^{i},i+1)
\]
and hence
\[
 (2^{i-1},0) \cdot P(t,i) \xrightarrow{{\Poly{i-1}}_*}
 P(t+2^{i},i+1). 
\]
Then we obtain
\begin{eqnarray*}
 (2^{i-1},0) \cdot P(t,i) &\xrightarrow{{\Poly{i-1}}_*}&
   P(t+2^{i},i+1),\\
  (2^{i},0)\cdot P(t+2^{i},i+1) &\xrightarrow{{\Poly{i}}_*}&
   P(t+2^i+2^{i+1},i+2),\\
  &\vdots&\\
  (2^{m-1},0)\cdot P(t+2^{i}+\cdots +2^{m-1},m) 
   &\xrightarrow{{\Poly{m-1}}_*}& 0
 \end{eqnarray*}
and this completes the proof of Lemma \ref{lem:alpha-P}.
\end{proof}

\begin{Lemma}
\label{lem:hat}
Let $\alpha_i$ be as in Lemma \ref{lem:alpha-P}. Then we have
$ (\alpha_i,0)\cdot \hat{P}(s,i,j) \in \J_{n}$.
\end{Lemma}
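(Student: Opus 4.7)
The plan is to reduce Lemma \ref{lem:hat} to Lemma \ref{lem:alpha-P} through a direct factorization of $\hat{P}(s, i, j)$. An element $\epsilon_m(p) = (\vec{p}, 0, 0, 1^j, 0^i) \in \bar{\Delta}(i, j)$ admits the arithmetic decomposition $p = 2^{i}(2^j - 1) + 2^{i+j+2}\tilde{p}$, where $\tilde{p}$ has binary expansion $\vec{p} \in \Delta_{m-i-j-2}$. The first step is therefore to verify the identity
\[
  \hat{P}(s, i, j) = \bigl(2^{i}(2^j - 1),\, 0\bigr) \cdot P\!\left(s - 2^{i+1}(2^j - 1),\, i + j + 2\right)
\]
by a straightforward bookkeeping check on the exponents of $\oriw_2$ and $\oriw_3$: the monomial $(2^{i}(2^j-1), 0)$ supplies the middle $1^j$ block, and the factor $P(s - 2^{i+1}(2^j-1), i+j+2)$ supplies the upper $\vec{p}$ block above the two zero padding positions.

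Next, writing $s' = s - 2^{i+1}(2^j - 1)$, I would establish $(\alpha_{i+j+2}, 0) \cdot P(s', i+j+2) \in \J_{n}$ by re-running the telescoping reduction chain from the proof of Lemma \ref{lem:alpha-P}:
\[
  (2^{k-1}, 0) \cdot P(t_k, k) \xrightarrow{{\Poly{k-1}}_*} P(t_k + 2^k, k+1), \qquad k = i+j+2, \dots, m,
\]
starting from $t_{i+j+2} = s'$ and ending with $(2^{m-1}, 0) \cdot P(t_m, m) \xrightarrow{{\Poly{m-1}}_*} 0$. Although $s'$ may lie below the range $2^{k-2} + n + 1 \le t$ required by Lemma \ref{lem:alpha-P}, a direct computation gives
\[
  c_k = \tfrac{t_k + 2^{k-1} - n - 1}{3} = 2^i - 2^{i+j+1} + 2^{k-1},
\]
which is a non-negative integer for every $k \ge i+j+2$; moreover the terminal quotient $t_m/3 - (2^{m-1} - 1) = 2^{m-1} + 2^i - 2^{i+j+1}$ is non-negative because nonemptiness of $\hat{P}(s, i, j)$ forces $i + j + 2 \le m$. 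Thus the chain terminates in $0$ modulo $(\Poly{i+j+1}, \ldots, \Poly{m-1}) \subseteq \J_n$.

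Finally, since $\J_n$ is an ideal and a direct arithmetic check gives
\[
  \alpha_i + 2^i(2^j - 1) - \alpha_{i+j+2} = 3\bigl(2^{i+j} - 2^{i-1}\bigr) \ge 0 \qquad (j \ge 1),
\]
multiplying by the monomial $(\alpha_i + 2^i(2^j-1) - \alpha_{i+j+2},\, 0)$ yields
\[
  (\alpha_i, 0) \cdot \hat{P}(s, i, j) = \bigl(\alpha_i + 2^i(2^j - 1),\, 0\bigr) \cdot P(s', i + j + 2) \in \J_{n}.
\]
The main obstacle will be the second step: justifying the reduction chain for $s'$ outside the formal hypothesis range of Lemma \ref{lem:alpha-P}. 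This ultimately requires only the elementary $2$-adic inequalities above, all of which are forced by the nonemptiness constraint $i + j + 2 \le m$, so no new conceptual ideas beyond careful bookkeeping of the arithmetic at each reduction step should be needed.
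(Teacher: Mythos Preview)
Your approach is essentially the same as the paper's: both feed $\hat{P}(s,i,j)$ into the telescoping reduction chain underlying Lemma~\ref{lem:alpha-P}. The only difference is the entry point---the paper multiplies $\hat{P}(s,i,j)$ by $(2^{j+i}+2^i,0)$ and performs one reduction by $P_{j+i+1}$ to obtain a $P(\,\cdot\,,j+i+3)$ whose parameter already lies in the hypothesis range of Lemma~\ref{lem:alpha-P} (so that lemma can be quoted as a black box), whereas your direct factorization $\hat{P}(s,i,j)=(2^i(2^j-1),0)\cdot P(s',i+j+2)$ lands one index earlier with $s'$ outside that range, forcing you to rerun the chain and check the sign conditions $c_k\ge 0$ by hand; both executions are correct.
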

\begin{proof}
Quite similarly to the proof of Lemma \ref{lem:alpha-P}, one has
\[
   (2^{j+i}+2^i,0) \cdot \hat{P}(s,i,j) 
 + \left(0,\tfrac{s+2^{i+1} + 1}{3}\right) \cdot \Poly{j+i+1}
 = P(s+2^{j+i+1} + 2^{i+1}, j+i+3).
\]
By Lemma \ref{lem:alpha-P}, we have
$2^{j+i}+2^i + \alpha_{j+i+3} < \alpha_i$ 
and then Lemma \ref{lem:hat} is accomplished.
\end{proof}
It follows from Lemma \ref{lem:alpha-P} and Lemma 
\ref{lem:hat} that:
\begin{Proposition}
\label{prop:height}
Let $\alpha_i$ be as in Lemma \ref{lem:alpha-P}. Then we have
$\alpha_{i+1} = 
\min\{\alpha|(\alpha,0)\cdot \LT(\Poly{i}) \in \J_{n}\}$.
\end{Proposition}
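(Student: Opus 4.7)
The plan is to exploit the decomposition (\ref{eq:P^}): using $\Poly{i} \in J_{n}$ it rearranges to $\LT(\Poly{i}) \equiv P(2^{i}+n+1, i+2) + \sum_{j=1}^{m-i-2} \hat{P}(2^{i}+n+1, i, j) \pmod{J_{n}}$. The question therefore reduces to finding the smallest $\alpha$ for which $(\alpha,0)$ annihilates this sum modulo $J_{n}$.

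For the upper bound $(\alpha_{i+1}, 0) \cdot \LT(\Poly{i}) \in J_{n}$, I would verify that $(\alpha_{i+1}, 0)$ kills each summand mod $J_{n}$. The $P(2^{i}+n+1, i+2)$ piece is handled by Lemma \ref{lem:alpha-P}, since $\alpha_{i+2} = 2^{m} - 2^{i+1} < 2^{m} - 2^{i} = \alpha_{i+1}$. Each $\hat{P}(2^{i}+n+1, i, j)$ is handled by imitating the proof of Lemma \ref{lem:hat}: the congruence $(2^{j+i}+2^{i}, 0) \cdot \hat{P} \equiv P(s', j+i+3) \pmod{J_{n}}$ (for an appropriate $s'$) combined with Lemma \ref{lem:alpha-P} at index $j+i+3$ yields $(2^{j+i}+2^{i} + \alpha_{j+i+3}, 0) \cdot \hat{P} \in J_{n}$, and a short arithmetic check shows $2^{j+i}+2^{i} + \alpha_{j+i+3} = 2^{m} - 3\cdot 2^{j+i} + 2^{i} \leq \alpha_{i+1}$ for every $j \geq 1$.

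For the lower bound, I would invoke the Gr\"obner basis $\{\Poly{0}, \dots, \Poly{m}\}$ from Theorem \ref{Groebner-I}. Any $\alpha$ with $(\alpha, 0) \cdot \LT(\Poly{i}) \in J_{n}$ must, after the first reduction by $\Poly{i}$, leave a polynomial $(\alpha, 0) \cdot \bigl[P(2^{i}+n+1, i+2) + \sum_{j} \hat{P}\bigr]$ that reduces further to zero under the division algorithm. Tracing these reductions against the leading terms $\LT(\Poly{k})$, one shows that the leading contribution of $(\alpha, 0) \cdot P(2^{i}+n+1, i+2)$ cannot be cancelled by the strictly smaller leading terms coming from the $\hat{P}$ residues when $\alpha$ is below $\alpha_{i+1}$, forcing $\alpha \geq \alpha_{i+1}$.

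I expect the main obstacle to be the lower bound. The upper bound falls out almost mechanically once (\ref{eq:P^}), Lemma \ref{lem:alpha-P}, and the Lemma \ref{lem:hat}-style reduction are assembled, whereas minimality demands careful tracking of a multi-step Gr\"obner reduction and a verification that no delicate cancellation between the $P$-summand and the $\hat{P}$-summands can produce an element of $J_{n}$ for an intermediate value of $\alpha$.
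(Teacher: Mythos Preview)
Your proposal is correct and follows the same route as the paper: the paper's entire proof is the single sentence ``It follows from Lemma~\ref{lem:alpha-P} and Lemma~\ref{lem:hat},'' which is exactly your upper-bound argument via the decomposition~(\ref{eq:P^}). Your remark that one must use the \emph{proof} of Lemma~\ref{lem:hat} (yielding the sharper bound $2^{j+i}+2^{i}+\alpha_{j+i+3}\le\alpha_{i+1}$) rather than its stated conclusion with $\alpha_i$ is well taken, and the paper leaves the lower bound just as implicit as you anticipated.
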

\begin{Corollary}
\label{cor:chi_2}
Let $\chi_1$ be a fixed integer such that $2^{m+1}-2^{i+1}-2^{i+2} \leq
 \chi_1 < 2^{m+1}-2^{i}-2^{i+1}$ and let $\chi_2 = \max\{z|(\chi_1,z)\notin
 \J_{n}\}$.
Then we have $\chi_2= 2^i -1$.
\end{Corollary}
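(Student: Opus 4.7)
The plan is to prove the equality $\chi_2 = 2^i - 1$ by establishing the lower bound $\chi_2 \geq 2^i - 1$ directly from Proposition \ref{prop:height}, and then the matching upper bound $(\chi_1, 2^i) \in J_n$ by recycling the same circle of ideas (Lemmas \ref{lem:alpha-P} and \ref{lem:hat}) with an extra factor of $\oriw_3$.

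For the lower bound, I translate Proposition \ref{prop:height} into a statement about individual monomials. Since $\alpha_{i+1} = 2^m - 2^i$ is the minimum $\alpha$ for which $(\alpha,0)\cdot \LT(P_i) = (\alpha + 2^m - 2^i,\, 2^i - 1) \in J_n$, the minimum first coordinate $p$ for which $(p, 2^i - 1) \in J_n$ is $p = 2^{m+1} - 2^{i+1}$. In the complementary regime $p < 2^m - 2^i$, the monomial $(p, 2^i - 1)$ is not divisible by any leading term $\LT(P_k) = (2^m - 2^k, 2^k - 1)$ of the Gr\"obner basis---for $k \leq i$ the first coordinate of $\LT(P_k)$ already exceeds $p$, and for $k > i$ the second coordinate $2^k - 1$ exceeds $2^i - 1$---so by the Gr\"obner basis theorem it is a standard monomial and hence not in $J_n$. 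Consequently $(p, 2^i - 1) \in J_n$ if and only if $p \geq 2^{m+1} - 2^{i+1}$. Since $\chi_1 < 2^{m+1} - 2^i - 2^{i+1} = 2^{m+1} - 3\cdot 2^i < 2^{m+1} - 2^{i+1}$, this forces $(\chi_1, 2^i - 1) \notin J_n$, so $\chi_2 \geq 2^i - 1$.

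For the upper bound I aim to show $(\chi_1, 2^i) \in J_n$, which reduces to exhibiting $(\alpha, 1)\cdot \LT(P_i) \in J_n$ for $\alpha = \chi_1 - 2^m + 2^i$. Here I apply the decomposition
\[
 \LT(P_i) = P_i + P(2^i + n + 1,\, i+2) + \sum_{1 \leq j \leq m-i-2} \hat{P}(2^i + n + 1,\, i, j)
\]
from Equation (\ref{eq:P^}). The $(\alpha, 1)\cdot P_i$ summand is automatically in $J_n$, so one is left to verify $(\alpha, 1)\cdot P(2^i + n + 1,\, i+2) \in J_n$ and $(\alpha, 1)\cdot \hat{P}(2^i + n + 1, i, j) \in J_n$. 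These follow by factoring out $(0,1)$ and applying Lemma \ref{lem:alpha-P} (to $P(\cdot, i+2)$) and Lemma \ref{lem:hat} (to the $\hat{P}$ pieces) exactly as in the deduction of Proposition \ref{prop:height}. The lower bound $\chi_1 \geq 2^{m+1} - 2^{i+1} - 2^{i+2}$ feeds through to give $\alpha = \chi_1 - 2^m + 2^i \geq 2^m - 5\cdot 2^i$, which I claim meets the thresholds produced by these lemmas.

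The main obstacle will be the bookkeeping in the final step: one must check that the congruence constraint $t - 2(2^m - 2^{i+2}) \equiv 0 \pmod 3$ in Lemma \ref{lem:alpha-P} is satisfied with $t = 2^i + n + 1$ (which uses $n = 2^{m+1} - 4$), and that the $\hat{P}$-thresholds $2^{j+i} + 2^i + \alpha_{j+i+3}$ stay below $\alpha$ for every admissible $j$. Both tasks reduce to manipulating binary expansions and exploiting the structural properties of $\Delta_k$ collected in Proposition \ref{prop:Delta}, but threading them together uniformly across the whole range of $\chi_1$ is where the combinatorial care is concentrated.
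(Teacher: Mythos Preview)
Your plan parallels the paper's, but it runs aground on a typo in the statement: the paper's own proof concludes with $\chi_2 = q_{i+1}-1 = 2^{i+1}-2$, not $2^i-1$. These coincide only for $i=0$ (the case that feeds into Corollary~\ref{cor:cup(Im p^*)}), and for $i\ge 1$ the printed value is simply false. Concretely, take $m=3$, $i=1$, so $n=12$ and the range reads $4\le\chi_1<10$; the Gr\"obner leading terms are $(7,0),(6,1),(4,3),(0,7)$, so $(\chi_1,2^i)=(4,2)$ is divisible by none of them, hence is a standard monomial and cannot lie in $J_n$. Thus the upper-bound target you set yourself, $(\chi_1,2^i)\in J_n$, is unreachable.

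The place where your sketch would have revealed this is the threshold check you defer to the last paragraph. Factoring $(\alpha,1)\cdot\LT(P_i)$ via (\ref{eq:P^}) and applying Lemma~\ref{lem:alpha-P} to the summand $P(2^i+n+1,\,i{+}2)$ demands $\alpha\ge\alpha_{i+2}=2^m-2^{i+1}$, whereas the lower end of the range only gives $\alpha\ge 2^m-5\cdot 2^i$, which is strictly smaller for every $i\ge 0$. So the ``combinatorial care'' cannot be threaded: the inequality goes the wrong way.

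The paper's route is to aim one step higher: show $(\chi_1,q_{i+1})=(\chi_1,2^{i+1}-1)\in J_n$ by writing it as $(\chi_1-p_{i+1},0)\cdot\LT(P_{i+1})$ and invoking the analogue of Proposition~\ref{prop:height} for $P_{i+1}$; the resulting threshold is exactly $p_{i+1}+\alpha_{i+3}$, the left endpoint of the stated range. Correspondingly, the lower bound needed is $(\chi_1,2^{i+1}-2)\notin J_n$, which is stronger than the $(\chi_1,2^i-1)\notin J_n$ you establish. (Your lower-bound argument is fine as far as it goes; it just does not reach the correct $\chi_2$.)
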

\begin{proof}
 Let $(p_i,q_i)=\LT(\Poly{i})$. Then, by (\ref{eq:LT(P)}) and Lemma
 \ref{lem:alpha-P}, we have
\begin{eqnarray*}
&\cdots > p_{i-1}+\alpha_{i+1}> p_i+\alpha_{i+2} > p_{i+1}+\alpha_{i+3}
\cdots,&\\
&\cdots < q_{i-1}<q_i<q_{i+1}< \cdots.&
\end{eqnarray*}
Hence, by Theorem \ref{Groebner} and Theorem \ref{Groebner-I}, we have
 established that, for
$p_{i+1}+\alpha_{i+3}=2^{m+1}-2^{i+1}-2^{i+2}
\leq \chi_1 <2^{m+1}-2^i-2^{i+1} = p_i+\alpha_{i+2}$,
one has $\chi_2=q_{i+1}-1=2^{i+1}-2$.
\end{proof}
From Corollary \ref{cor:chi_2}, $\chi_1 + \chi_2$ takes the maximum when 
$(\chi_1,\chi_2)=(n,0)$ and it is $n$, of course, it is equal to 
$\cuplen(\Image p_n^*)$. 
Therefore we have obtained:
\begin{Corollary}
 \label{cor:cup(Im p^*)}
$\cuplen(\Image p_n^*) = n$. In particular, $\oriw_2^n \neq 0$.
\end{Corollary}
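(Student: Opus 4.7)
The plan is to deduce $\cuplen(\Image p_n^*) = n$ as a pair of matching bounds, both essentially direct consequences of Corollary \ref{cor:chi_2}. The only structural input beyond that is that $\Image p_n^* \cong \Zz[\oriw_2, \oriw_3]/\J_n$ is a finite-dimensional graded connected $\Zz$-algebra, whence its non-units coincide with the augmentation ideal $(\oriw_2, \oriw_3)$; equivalently, every non-unit admits a lift to a polynomial in $\Zz[\oriw_2, \oriw_3]$ with vanishing constant term. (In a connected graded finite-dimensional $\Zz$-algebra, positive-degree elements are nilpotent, so $1+a$ is a unit for every $a$ in the augmentation ideal, while the augmentation ideal itself contains no units.)

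First I would read off from Corollary \ref{cor:chi_2} the single numerical fact that does the work, namely $\max\{\chi_1 + \chi_2 \mid \oriw_2^{\chi_1}\oriw_3^{\chi_2} \notin \J_n\} = n$, attained at $(\chi_1, \chi_2) = (n, 0)$. Running over the intervals indexed by $i$ in that corollary and combining the $\chi_1$-range with the stated bound on $\chi_2$, one gets $\chi_1 + \chi_2 \leq 2^{m+1} - 2^{i+1} - 2$, which is strictly decreasing in $i$ and attains its maximum $n = 2^{m+1} - 4$ at $i = 0$, realized by $(n, 0)$ in the interval $[n-2, n+1)$. Two consequences follow at once: first, $\oriw_2^n \neq 0$ in $\Image p_n^*$, which factored as $n$ copies of the non-unit $\oriw_2$ gives $\cuplen(\Image p_n^*) \geq n$; second, every monomial $\oriw_2^{\chi_1}\oriw_3^{\chi_2}$ with $\chi_1 + \chi_2 \geq n+1$ lies in $\J_n$ and so vanishes in $\Image p_n^*$.

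For the matching upper bound, I would pick arbitrary non-units $y_1, \ldots, y_{n+1} \in \Image p_n^*$, lift each to a polynomial $Y_j \in \Zz[\oriw_2, \oriw_3]$ with zero constant term, and expand $Y_1 \cdots Y_{n+1}$: every monomial that appears has total exponent at least $n+1$, hence represents $0$ in $\Image p_n^*$ by the previous paragraph, so the product vanishes, giving $\cuplen(\Image p_n^*) \leq n$. The main (really only) obstacle is the numerical maximization of $\chi_1 + \chi_2$ across the intervals of Corollary \ref{cor:chi_2}; the expansion argument is then formal, exactly because the lifts $Y_j$ have no constant term and so cannot contribute monomials of total exponent less than $n+1$ to the product.
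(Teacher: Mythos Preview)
Your proof is correct and follows essentially the same route as the paper's: both read off from Corollary~\ref{cor:chi_2} that $\max\{\chi_1+\chi_2 \mid \oriw_2^{\chi_1}\oriw_3^{\chi_2}\notin J_n\}=n$, attained at $(n,0)$, and identify this maximum with $\cuplen(\Image p_n^*)$. You additionally spell out the passage from ``maximum total exponent of a nonzero monomial'' to ``cup-length'' via the lifting-and-expansion argument for products of non-units, which the paper compresses into the phrase ``of course, it is equal to $\cuplen(\Image p_n^*)$''; this is a welcome clarification but not a different method.
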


\section{Cup-length of $\oriGr{n,3}$ and its applications}
\label{sec:cuplen}
In this section, we determine $\cuplen_{\Zz}(\oriGr{n,3})$ and
give its applications to immersion of $\oriGr{n,3}$ into a Euclidean
space.

\begin{proof}[Proof of Theorem \ref{thA:cuplen}]
 By Corollary \ref{cor:cup(Im p^*)}, one has $\bar{w}_2^n\ne 0$. Then,
 by Poincar\'e 
 duality, there exists $x\in \Hz[n]{\oriGr{n,3}}$ such that
 $\bar{w}_2^nx\ne 0$ and hence we have
 $\cuplen_{\Zz}(\oriGr{n,3})\ge n+1$.

 Note that the canonical map $\oriGr{n,3}\to B\SO(3)$ is
 an $n$-equivalence. Then it follows that $\Hz{\oriGr{n,3}}\cong\mathbf{Im}
 p_n^*$ in dimensions less than $n$. Now suppose that there exist
 $x_1,\ldots,x_{n+2} \in \widetilde{H}^*(\oriGr{n,3};\Zz)$ such that $x_1\cdots
 x_{n+2}\ne 0$. By a dimensional reason, one has $|x_i|<n$ for each $i$
 and then this contradicts to Corollary \ref{cor:cup(Im p^*)}. Hence we
 have obtained  Theorem \ref{thA:cuplen}.
\end{proof}
%
%
\begin{proof}[Proof of Corollary] From Theorem \ref{thA:cuplen} and the 
inequality
 $\cuplen_{\Zz}(\oriGr{n,3}) \le \lscat(\oriGr{n,3})$, it
 follows that $n+1 \leq \lscat(\oriGr{n,3})$.

Note that $\bar{w}_2\in \Hz[2]{\oriGr{n,3}}$ is the fundamental class in
 the sense of James \cite{J}. By Corollary \ref{cor:cup(Im p^*)}, we
 have $\bar{w}_2^{n+1}=0$
 and then it follows from Proposition 5.3 in \cite{J} that
 $\lscat(\oriGr{n,3})<\frac{3}{2}n$. Hence we have established
 Corollary.
\end{proof}

Let us consider the immersion of $\oriGr{n,3}$ into a Euclidean space as
applications of Theorem \ref{thA:cuplen}. Of course, as mentioned in
section \ref{sec:intro}, we know, by the
result of Whitney \cite{Wh}, that $\oriGr{n,3}$ immerses into $\R^{6n-1}$. We
shall give a slightly better estimation. 

We denote the canonical vector bundle over
$\oriGr{n,3}$ by $\gamma$ and a stable normal
bundle of $\oriGr{n,3}$ by $\nu$. We abbreviate the classifying map
$\oriGr{n,3}\to B\SO(\infty)$ of $\nu$ by the same symbol $\nu$. It is
well-known that $T\oriGr{n,3} = \gamma \otimes \gamma^\bot$, then we have 
\begin{alignat}{1}
 T\oriGr{n,3} \oplus \gamma \otimes \gamma 
	    &= \gamma \otimes \gamma^\bot \oplus \gamma \otimes \gamma \notag\\
	    &= \gamma \otimes (\gamma^\bot \oplus \gamma ) \notag\\
	    &= (n+3)\gamma.
\end{alignat}
By Corollary \ref{cor:cup(Im p^*)}, we have 
$(1+\oriw_2+\oriw_3)^{n+4} = 1$. 
Using the formula for the Stiefel-Whitney class of a tensor product
shows that $w(\gamma \otimes \gamma) = 1+\oriw_2^2+\oriw_3^3$.
Since $\nu \oplus T\oriGr{n,3}$ is trivial, we have
\begin{alignat}{1}
\label{normal}
w(\nu)  &= \frac{w(\gamma \otimes \gamma)}{w((n+3)\gamma)} \notag \\
        &= \frac{1+\oriw_2^2+\oriw_3^2}{(1+\oriw_2 +\oriw_3)^{n+3}} \notag \\
        &= (1+\oriw_2^2+\oriw_3^2)(1+\oriw_2+\oriw_3) \notag\\
        &= 1 + \oriw_2 +\oriw_3 + \oriw_2^2+\oriw_2^3 +\oriw_3^2 
           +  \oriw_2^2\oriw_3+ \oriw_2\oriw_3^2 + \oriw_3^3. 
\end{alignat}
Then it immediately follows that $\oriGr{n,3}$ does not immerse into
$\R^{3n+8}$ for $n=2^{m+1}-4 \,m\geq3$ and $\oriGr{4,3}$ does not immerse into
$\R^{17}$.

Now let us consider the modified Postnikov tower of the fibration
\[
 B\SO(3n-3)\to B\SO(\infty)
\]
following Gitler and Mahowald \cite{GM}. The
$A_2$-free resolution of $H^*(SO(\infty)/SO(3n-3))$ in dimensions less
than or equal to $3n$ is given as follows, where $A_2$ denotes the mod
$2$ Steenrod algebra.
\[
 C_2\stackrel{d_2}{\to} C_1\stackrel{d_1} {\to}
 H^*(\SO(\infty)/\SO(3n-3)) \to 0,
\]
\begin{eqnarray*}
&C_1=\bracket{x_{3n-3},x_{3n-1}},\;C_2=\bracket{y_{3n-1}},&\\
& \;d_1(x_{3n-3})=e_{3n-3},
 \;d_1(x_{3n-1}) = e_{3n-1},
 \;d_2(y_{3n-1})=Sq^2x_{3n-3},\;|x_i|=i,&
\end{eqnarray*}
where $\bracket{x}$ and $e_{i}$ denote the free $A_2$-module
generated by
$x$ and a generator of 
\[
 H^{i}(\SO (\infty)/ \SO(3n-3))\cong\Zz
\] for $i = 3n-3,3n-1$ respectively. Then the modified Postnikov tower
of $B\SO(3n-3)\to B\SO(\infty)$ in dimensions less than or equal to $3n$
is given as: 
\begin{eqnarray*}
 \xymatrix{
       B\SO(3n-3)\ar[d]\\
       E\ar[d]\ar[r]^{k_2\phantom{-----}} & K(\Zz,3n-1)\\
     B\SO(\infty)\ar[r]^{w_{3n-2}\times w_{3n} \phantom{---------}} 
    & K(\Zz,3n-2) \times K(\Zz,3n)
 }
\end{eqnarray*}
It follows from \eqref{normal} that $w_{3n-2}(\nu) = w_{3n}(\nu)=0$ and then
$\nu \colon \oriGr{n,3} \to B\SO (\infty)$ lifts to 
$\tilde{\nu} \colon \oriGr{n,3} \to E$. 
By Poincar\'{e} duality, one has $\Hz[3n-1]{\oriGr{n,3}}=0$. Then
$\tilde{\nu} \colon \oriGr{n,3}\to E$ lifts to $\bar{\nu} \colon \oriGr{n,3}\to
B\SO(3n-3)$ and hence we can see from the result of Hirsch \cite{H} that
$\oriGr{n,3}$ immerses into $\R^{6n-3}$. Then we have one obtains
Theorem \ref{thA:imm}.

\bibliographystyle{amsalpha}
\bibliography{refs}

\providecommand{\bysame}{\leavevmode\hbox to3em{\hrulefill}\thinspace}
\providecommand{\MR}{\relax\ifhmode\unskip\space\fi MR }
\providecommand{\MRhref}[2]{%
  \href{http://www.ams.org/mathscinet-getitem?mr=#1}{#2}
}
\providecommand{\href}[2]{#2}
\begin{thebibliography}{CLO97}

\bibitem[Bor53]{B}
A.~Borel, \emph{La cohomologie mod {$2$} de certains espaces homog\`enes},
  Comment. Math. Helv. \textbf{27} (1953), 165--197.

\bibitem[CLO97]{CLO}
David Cox, John Little, and Donal O'Shea, \emph{Ideals, varieties, and
  algorithms}, second ed., Undergraduate Texts in Mathematics, Springer-Verlag,
  New York, 1997, An introduction to computational algebraic geometry and
  commutative algebra.

\bibitem[Coh85]{C}
Ralph~L. Cohen, \emph{The immersion conjecture for differentiable manifolds},
  Ann. of Math. (2) \textbf{122} (1985), no.~2, 237--328.

\bibitem[GM66]{GM}
S.~Gitler and M.~Mahowald, \emph{The geometric dimension of real stable vector
  bundles}, Bol. Soc. Mat. Mexicana (2) \textbf{11} (1966), 85--107.

\bibitem[Hir59]{H}
Morris~W. Hirsch, \emph{Immersions of manifolds}, Trans. Amer. Math. Soc.
  \textbf{93} (1959), 242--276.

\bibitem[Jam78]{J}
I.~M. James, \emph{On category, in the sense of {L}usternik-{S}chnirelmann},
  Topology \textbf{17} (1978), no.~4, 331--348.

\bibitem[Kor06]{K}
J{\'u}lius Korba{\v{s}}, \emph{Bounds for the cup-length of {P}oincar\'e spaces
  and their applications}, Topology Appl. \textbf{153} (2006), no.~15,
  2976--2986.

\bibitem[Wal01]{Wa}
Markus Walgenbach, \emph{Lower bounds for the immersion dimension of
  homogeneous spaces}, Topology Appl. \textbf{112} (2001), no.~1, 71--86.

\bibitem[Whi44]{Wh}
Hassler Whitney, \emph{The self-intersections of a smooth {$n$}-manifold in
  {$2n$}-space}, Ann. of Math. (2) \textbf{45} (1944), 220--246.

\end{thebibliography}

\end{document}